\newtheorem{theorem}{Theorem}[section]
\newtheorem{lemma}[theorem]{Lemma}
\newtheorem{proposition}[theorem]{Proposition}
\newtheorem{corollary}[theorem]{Corollary}
\newtheorem{conjecture}[theorem]{Conjecture}
\theoremstyle{definition}
\newtheorem{definition}[theorem]{Definition}
\newtheorem{example}[theorem]{Example}
\newtheorem{remark}[theorem]{Remark}
\newcommand{\excise}[1]{}
\renewcommand{\dim}{\operatorname{dim}}
\newcommand{\Ann}{\operatorname{Ann}}
\renewcommand{\and}{\qquad\text{and}\qquad}
\newcommand{\A}{\mathbf{A}}
\newcommand{\B}{\mathbf{B}}
\newcommand{\M}{\mathbf{M}}
\newcommand{\Gbar}{\overline{G}}
\newcommand{\Mbar}{\overline{M}}
\newcommand{\Mboldbar}{\overline{\mathbf{M}}}
\newcommand{\Q}{\mathbb{Q}}
\newcommand{\N}{\mathbb{N}}
\newcommand{\Sn}{\mathfrak{S}}
\DeclareMathOperator{\FI}{FI}
\renewcommand{\phi}{\varphi}
\title{Independence numbers in certain families of highly symmetric graphs}
\author{David Guan\footnote{DG was supported by a Gibbons fellowship from Bowdoin College} and Eric Ramos \footnote{ER was supported by NSF grant DMS-2137628}}
\begin{document}

\maketitle

{\small
\begin{quote}
\noindent {\em Abstract.}
FI-graphs were introduced by the second author and White in \cite{RW} to capture the idea of a family of nested graphs, each member of which is acted on by a progressively larger symmetric group. That work was built on the newly minted foundations of representation stability theory and FI-modules. Examples of such families include the complete graphs and the Kneser and Johnson graphs, among many others. While it was shown in the originating work how various counting invariants in these families behave very regularly, not much has thus far been proven about the behaviors of the typical extremal graph theoretic invariants such as their independence and clique numbers. In this paper we provide a conjecture on the growth of the independence and clique numbers in these families, and prove this conjecture in one case. We also provide computer code that generates experimental evidence in many other cases. All of this work falls into a growing trend in representation stability theory that displays the regular behaviors of a number of extremal invariants that arise when one looks at FI-algebras and modules.
\end{quote} }

\section{Introduction}

\subsection{The setup}
For the remainder of this work, a \textbf{graph} will always refer to a finite simple graph. In the work \cite{RW}, The second author and White introduced what they call an $\textbf{FI-graph}$ (see Definition \ref{FIgraphdef}). For the purposes of this introduction, one may think of an FI-graph as a sequence of nested graphs,
\[
G_0 \subseteq G_1 \subseteq G_2 \subseteq \ldots,
\]
satisfying the following properties:
\begin{itemize}
\item For each $n \geq 0$, the symmetric group $\Sn_n$ acts by graph homomorphisms on $G_n$
\item For all $n$ these actions are compatible with one another, in the sense that if one acts on the subgraph $G_n \subseteq G_{n+1}$ by the subgroup $\Sn_n \leq \Sn_{n+1}$ of permutations that do not move $n+1$, then the action agrees with that of the group $S_n$ on the graph $G_n$
\item For any $n \gg 0$, and any vertex $v$ of $G_{n+1}$, $v$ is some $\Sn_{n+1}$-translate of a vertex of $G_{n}\subseteq G_{n+1}$
\item For any $n \gg 0$, the transposition $(n+1,n+2)$ acts trivially on the subgraph $G_n \subseteq G_{n+2}$
\end{itemize}
barring the final condition listed above, which is there for purely technical reasons, one should think of an $\FI$-graph as being a sequence of nested graphs, indexed by the natural numbers, each acted on by the corresponding symmetric group in such a way that these actions are compatible with one another.

The first example of such an object is the collection of complete graphs $G_n = K_n$. One may also consider the collection of complete bipartite graphs $K_{n,a}$, with $a$ fixed, the collection of Kneser graphs $KG_{n,r}$, with $r$ fixed, and many others. In the original work \cite{RW}, it is examined how these families of graphs interact with the theories of representation stability \cite{CF} and Church-Ellenberg-Farb $\FI$-modules \cite{CEF} In particular, it is proven that a number of counting invariants are in agreement with polynomials. For instance, one has the following.

\begin{theorem} \label{thmofsubgraph}
Let $G_\bullet$ be an FI-graph, and let $H$ be any fixed graph. Then for $n \gg 0$, the function
\[
n \mapsto \text{ the number of copies of $H$ that appear as subgraphs of $G_n$}
\]
agrees with a polynomial.
\end{theorem}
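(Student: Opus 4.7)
The plan is to realize the count of copies of $H$ in $G_n$ as the dimension of a finitely generated FI-module over $\Ql$, and then to invoke the polynomiality theorem of Church--Ellenberg--Farb~\cite{CEF}. Set $X_n$ to be the set of injective graph homomorphisms $H \hookrightarrow G_n$. The axioms of an FI-graph produce, for every injection $[m] \hookrightarrow [n]$, a graph homomorphism $G_m \to G_n$, and post-composition with such a homomorphism carries $X_m$ into $X_n$ functorially; hence $X_\bullet$ is an FI-set. Because the unlabeled subgraph count is $|X_n|/|\Aut(H)|$, it suffices to prove that $|X_n|$ agrees with a polynomial for $n \gg 0$.

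Next I would establish finite generation of $X_\bullet$ as an FI-set. The third bullet in the definition of an FI-graph says exactly that, for $n \gg 0$, every vertex of $G_n$ lies in the $\Sn_n$-orbit of a vertex of some fixed $G_m$, which is precisely the condition that the vertex FI-set $V_\bullet$ be finitely generated; the fourth bullet ensures that this generation is genuinely FI-functorial rather than only $\FS$-functorial. Finite generation is preserved by finite cartesian products, so $V_\bullet^{|V(H)|}$ is a finitely generated FI-set. Because FI-morphisms act by graph homomorphisms, the property of being an injective copy of $H$ is preserved under the FI-structure, and $X_\bullet \subseteq V_\bullet^{|V(H)|}$ is an FI-subset. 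Noetherianity of FI then forces $X_\bullet$ itself to be finitely generated. I expect this step---the careful translation of the four FI-graph axioms into honest FI-finite-generation statements---to be the main technical point; everything else is formal.

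Finally, I would linearize: the free $\Ql$-vector spaces $M_n := \Ql[X_n]$ assemble into a finitely generated FI-module with $\dim_{\Ql} M_n = |X_n|$, and by the main polynomiality theorem of~\cite{CEF} this dimension agrees with a polynomial for $n \gg 0$. Dividing by $|\Aut(H)|$ completes the argument.
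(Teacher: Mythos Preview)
Your proposal is correct and is precisely the approach the paper indicates. The paper does not give its own proof of this result but quotes it as \cite[Theorem B]{RW}, adding only that it follows from the polynomiality of dimensions of finitely generated $\FI$-modules \cite[Theorem 1.5]{CEF} via the techniques of \cite[Theorem 4.1]{RW}; your sketch---form the $\FI$-set of copies of $H$, establish finite generation by Noetherianity inside a cartesian power of the vertex $\FI$-set, linearize, and invoke \cite{CEF}---is exactly how that argument runs.
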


This theorem can be seen somewhat easily in the case of $G_n = K_n$. Indeed, as one example, the number of triangles in $K_n$ is precisely equal to the polynomial $\binom{n}{3}$. One should also note that as a consequence of the above theorem, one necessarily has that the total number of vertices of $G_n$ must be in agreement with a polynomial whenever $n$ is large enough.

One direction that the originating work \cite{RW} does not touch as much upon is the question of \emph{extremal} invariants of the graph $G_n$ as a function of $n$. While it is shown in \cite{RW} that the maximal and minimal vertex degrees of $G_n$ agree with polynomials in $n$, nothing is said about, for instance, the \textbf{independence numbers}, $\alpha(G_n)$, of these graphs. In this work, we propose the following conjecture.

\begin{conjecture} \label{mainconj}
Let $G_{\bullet}$ be an $\FI$-graph. Then the generating function
\[
\sum_{n \geq 0} \alpha(G_n)t^n,
\]
is rational. In particular, the function
\[
n \mapsto \alpha(G_n)
\]
is in agreement with a quasi-polynomial for all $n \gg 0$.
\end{conjecture}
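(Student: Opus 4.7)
The plan is to exploit the FI-structure of $G_\bullet$ to reduce the computation of $\alpha(G_n)$ to a pointwise maximum over a finite collection of polynomials, from which quasi-polynomiality (and rationality of the generating function) follows.

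The first step is to classify independent sets by combinatorial type. For $n$ sufficiently large, the axioms of an FI-graph force $V(G_n)$ to decompose into finitely many $\Sn_n$-orbits $V_1(n), \ldots, V_m(n)$ whose sizes are polynomial in $n$. To each independent set $S \subseteq V(G_n)$ I would attach a type $\tau(S)$ encoding its distribution across these orbits together with the pairwise non-adjacency patterns of the pairs drawn from $S$. By Theorem \ref{thmofsubgraph} applied to induced subgraphs, the number of independent sets of any fixed type $\tau$ is polynomial in $n$; equivalently, each $\Sn_n$-orbit of an independent set of fixed structural type has polynomial size.

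The next step, and the heart of the argument, is to show that for $n \gg 0$ the maximum $\alpha(G_n)$ is realized by independent sets whose types lie in a finite collection $\mathcal{T}$ that does not depend on $n$. Granting this, $\alpha(G_n) = \max_{\tau \in \mathcal{T}} f_\tau(n)$ is a pointwise maximum of finitely many polynomials, which is eventually quasi-polynomial (in fact polynomial once the dominant $f_\tau$ stabilizes on each residue class), and Conjecture \ref{mainconj} follows. This is plausible because in every example computed in \cite{RW} (complete, complete bipartite, Kneser, Johnson), the optimal independent set is a canonical ``FI-stable'' configuration whose description does not depend on $n$ beyond the choice of ambient label set.

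The main obstacle is precisely this finite-type reduction. Unlike the submodule structure of an FI-module, the family of independent sets is not closed under supersets, and ``being maximum'' is not itself an FI-functorial property, so standard FI-Noetherianity machinery does not immediately apply. One promising approach is a stability-of-extremal-configuration theorem: show that for $n$ large every maximum independent set admits a canonical decomposition $S = S_{\mathrm{core}} \sqcup S_{\mathrm{orbit}}$, where $S_{\mathrm{core}}$ has size bounded independently of $n$ and $S_{\mathrm{orbit}}$ is a full $\Sn_n$-orbit of a generating set of bounded size. Such a statement is morally an Erd\H{o}s--Ko--Rado stability theorem for FI-graphs. A more categorical alternative is to assemble the independence complexes of the $G_n$ into an FI-simplicial complex and develop a Noetherianity result bounding the dimensions of its top faces once the support has been fixed. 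Either route will require genuinely new input beyond the representation-stability toolkit of \cite{RW}, which is presumably why the authors can only settle the conjecture in a restricted case.
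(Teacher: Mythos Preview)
First, note that Conjecture~\ref{mainconj} is left open in the paper; only the vertex-linear special case (Theorem~\ref{mainthm}) is proved, and by an entirely different, algebraic route: the quotient $R_{G_n}/I_{G_n}$ by the edge ideal is realized as a finitely generated $\FI$-module over $(\B^{\otimes d})_\bullet$, and Nagpal's theorem (Theorem~\ref{linearkrulldimthm}) then gives linear growth of Krull dimension, hence of $\alpha(G_n)$. No combinatorial type-classification of independent sets appears anywhere in the argument, and the paper explicitly explains (Section~\ref{generalcase}) why this algebraic method does not extend past the linear case.

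Beyond the gap you already acknowledge, your outline has a structural flaw. If the finite-type reduction held and each $f_\tau$ were a polynomial (which is all that Theorem~\ref{thmofsubgraph} could give you), then $\alpha(G_n)=\max_{\tau\in\mathcal{T}}f_\tau(n)$ would be a maximum of finitely many polynomials, and such a maximum is eventually equal to a \emph{single} polynomial, since distinct polynomials are eventually strictly ordered. Your argument would therefore force $\alpha(G_n)$ to be eventually polynomial, not merely quasi-polynomial---but this is already false for the Johnson graph $J_{n,2}$, where $\alpha(G_n)=\lfloor n/2\rfloor$. The upshot is that no $n$-independent finite family of polynomial ``type sizes'' can capture optimal independent sets: the combinatorial type of an optimizer must itself carry periodic dependence on $n$, and once you allow that, the finiteness claim collapses back into the conjecture. (There is also a smaller muddle: Theorem~\ref{thmofsubgraph} controls the \emph{number} of subgraphs of a fixed shape, not the \emph{size} $f_\tau(n)$ of a type-$\tau$ configuration, and you have not defined ``type'' precisely enough to bridge that gap.)
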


\begin{remark}
It is not hard to show that if one starts with an $\FI$-graph $G_\bullet$ and replaces each $G_n$ with its complement, then the resulting sequence of graphs is once again an $\FI$-graph. It follows that the above conjecture implies the exact same rationality conclusion about the \textbf{clique number} of the graph. That is to say, the largest clique that one can find in $G_n$.
\end{remark}

Coming back to the case of the complete graph, one certainly has $\alpha(G_n) = 1$ for all $n$. On the other hand, it is the subject of the famous Erd\"os-Ko-Rado Theorem that for the Kneser graphs $G_n = KG_{n,r}$, one has $\alpha(G_n) = \binom{n-1}{r-1}$ whenever $n \gg 0$. As a third example, if one takes $G_n$ to be the complement of the Kneser graph $KG_{n,2}$ (i.e. the Johnson Graph $J_{n,2}$), then it is not hard to show that $\alpha(G_n) = \lfloor \frac{n}{2} \rfloor$.

This paper is broken into two parts. To start, we prove the following special case of our primary conjecture.

\begin{theorem} \label{mainthm}
Let $G_\bullet$ be an $\FI$-graph, and assume that the number of vertices of $G_n$ is equal to a linear function of $n$ for all $n \gg 0$. Then there exists constants $A,B$ such that
\[
\alpha(G_n) = An + B.
\]
\end{theorem}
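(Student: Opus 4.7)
The plan is to exploit $\Sn_n$-equivariance to reduce the computation of $\alpha(G_n)$ to a finite combinatorial optimization whose maximum grows affinely in $n$. First, I would analyze the $\Sn_n$-orbit structure on $V(G_n)$. The $\FI$-graph axioms imply that for $n \gg 0$ every vertex is an $\Sn_n$-translate of a vertex inherited from some fixed $G_m$, and the triviality of the action of $(n+1,n+2)$ on $G_n \subseteq G_{n+2}$ forces each vertex stabilizer to contain a Young subgroup of the form $\Sn_{n-m'}$ for some $m' \leq m$. Consequently each orbit has size polynomial in $n$ of degree at most $m$, and there are only boundedly many orbits (at most $|V(G_m)|$). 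Under the hypothesis that $|V(G_n)|$ is linear in $n$, every orbit must have size at most linear, hence is either an orbit of size $1$ (a fixed vertex, inherited from $V(G_0)$) or an orbit of size $n$ (with stabilizer conjugate to $\Sn_{n-1}$, inherited from $V(G_1)$). Let $F$ be the set of fixed vertices and let $O_1,\dots,O_r$ denote the size-$n$ orbits; each $O_k$ comes with a canonical $\Sn_n$-equivariant bijection $[n]\to O_k$, $i\mapsto o_k(i)$.

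Next, the edges of $G_n$ are $\Sn_n$-invariant, which forces a short menu of edge types: (i) between two fixed vertices, present or absent; (ii) between a fixed vertex and a size-$n$ orbit, either complete or empty; (iii) within a size-$n$ orbit, either complete or empty (since $\Sn_n$ is transitive on unordered pairs in $[n]$); (iv) between two distinct size-$n$ orbits $O_k$ and $O_\ell$, one of four possibilities---empty, matching ($o_k(i)\sim o_\ell(i)$), off-diagonal ($o_k(i)\sim o_\ell(j)$ for $i\neq j$), or complete bipartite---corresponding to the $\Sn_n$-orbits on $[n]\times[n]$. All of these edge types are constant in $n$ for $n\gg 0$.

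An independent set $I\subseteq V(G_n)$ is then encoded by a pair $(S,(T_k))$ with $S\subseteq F$ and $T_k\subseteq[n]$, so that $|I|=|S|+\sum_k|T_k|$; the independence condition becomes a list of combinatorial constraints depending only on the (fixed) edge-type data, e.g.\ a matching edge between $O_k$ and $O_\ell$ forces $T_k\cap T_\ell=\emptyset$, an off-diagonal edge forces $T_k$ and $T_\ell$ to coincide as singletons (or one to be empty), and so on. For any choice of ``combinatorial type'' of $(S,(T_k))$---specifying which orbits are to be fully activated, which partially, and which fixed vertices to include---the maximum of $|S|+\sum_k|T_k|$ under these constraints is an affine function $cn+d$ of $n$, with $c$ the number of orbits taken fully and $d$ a bounded correction from fixed vertices and partial selections. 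Since there are only finitely many such combinatorial types, $\alpha(G_n)$ is a maximum of finitely many affine functions in $n$, which is piecewise linear and eventually exactly affine, giving $\alpha(G_n)=An+B$ for $n\gg 0$.

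The main obstacle, I expect, is making the case analysis in the third step tight: while the natural optimum is to take as many orbits fully as possible, one must verify that cleverer partial selections---for instance splitting $[n]$ between several orbits joined by matching edges, or coordinating singletons across a cluster of off-diagonal constraints---cannot beat the all-or-nothing choice in slope. Concretely, one needs to show that in any optimizer each $T_k$ may be assumed to be empty, a singleton, or all of $[n]$ up to a bounded correction, which reduces $\alpha(G_n)$ to a genuine finite max of linear forms with integer slopes.
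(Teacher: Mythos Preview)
Your approach is correct in outline and genuinely different from the paper's. The paper proceeds algebraically: it attaches to each $G_n$ the polynomial ring $R_{G_n}$ on the vertex set and the edge ideal $I_{G_n}$, invokes the classical fact that $\alpha(G_n)=\dim(R_{G_n}/I_{G_n})$, then shows that the collection $(R_{G_n}/I_{G_n})_n$ assembles into a finitely generated $\FI$-module over the $\FI$-algebra $(\B^{\otimes d})_\bullet$ with $\B_n=\Q[x_1,\dots,x_n]$, and finally cites Nagpal's theorem that Krull dimensions of such modules are eventually linear. The paper in fact remarks that a combinatorial proof is possible, but deliberately chooses the algebraic route because it points toward the general conjecture. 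Your argument is exactly the combinatorial one: classify vertex orbits and $\Sn_n$-invariant edge types, encode an independent set by subsets $T_k\subseteq[n]$, and reduce $\alpha(G_n)$ to a finite maximum of affine functions. This is more elementary and self-contained, and yields an explicit description of the slope $A$; the cost is that, as the paper emphasizes, the combinatorics does not scale gracefully once orbits of size $\binom{n}{2}$ or larger appear, whereas the edge-ideal framework at least formulates the problem uniformly.

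One point to tighten: your ``combinatorial type'' is not defined, and the assertion that the optimum for each type is affine is exactly where the content lies. A clean way to close this is to record, for each $i\in[n]$, the profile $p_i\in\{0,1\}^r$ with $(p_i)_k=1$ iff $i\in T_k$. The matching and complete-bipartite constraints restrict which profiles are allowed; the internal-complete, off-diagonal, and complete-bipartite constraints impose a symmetric compatibility relation on pairs of profiles at distinct positions. The problem becomes: choose $n$ profiles, pairwise compatible, maximizing total Hamming weight. For each clique $C$ of compatible profiles, the optimum is $n\cdot w^*(C)+b_C$ for $n$ large, where $w^*(C)$ is the maximum weight of a self-compatible profile in $C$ and $b_C$ is a bounded correction from using each heavier non-self-compatible profile once. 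Maximizing over the finitely many cliques (and over the finitely many choices of $S\subseteq F$) gives $\alpha(G_n)$ as a finite max of affine functions, hence eventually affine. This makes precise your intuition that each $T_k$ may be taken to be empty, a singleton, or all of $[n]$ up to bounded correction.
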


While this theorem might be proven in a purely combinatorial manner, we instead opt to prove it in an almost entirely algebraic fashion. This approach will require us to apply much of the state of the art of $\FI$-modules as well as $\FI$-algebras. We intentionally choose this approach as it suggests, in our opinion, what is likely the correct approach to prove the general conjecture. We will see that the combinatorics of $\FI$-graphs become drastically more complicated as soon as one allows their vertices to grow faster than linear. While this is true on the algebraic side as well, the algebra has the benefit of a robust underlying theory that might be able to contend with this growing complexity. We will touch upon how exactly our techniques generalize to the higher growth cases at the end of section \ref{proofofmainthm}.

Following these theoretical advancements, we spend the remainder of the paper providing experimental evidence for our primary conjecture. To accomplish this, we prove a classification theorem (See Section \ref{quadclass}) for all $\FI$-graphs whose vertex sets grow quadratically. This classification theorem allows us to perfectly encode the data of all the of graphs which comprise an $\FI$-graph into a finite amount of data. This reduction to the finite allows us to efficiently compute the independence numbers of many $\FI$-graphs for very large values of $n$, thereby providing evidence for our main conjecture. One may think of our approach here therefore being in line with the philosophy of \emph{quantitative} representation stability, as in \cite{DL,MMPR}. In other words, we use the machinery of representation stability to reduce an infinite problem to a finite one, and then use computer experimentation to explicitly contend with the newly finite problem.

\subsection{Our Main conjecture in the context of representation stability}

In order to paint as full a picture as possible, we spend a bit of time examining our main conjecture from the perspective of trends in representation stability theory.

One of the early realizations in the context of representation stability, as well as the theory of $\FI$-modules, is that the dimensions of their graded pieces eventually agree with a polynomial. Since these early results, however, it has become clear that there are higher levels of this behaviors which we do not yet have adequate explanation for. More specifically, it has been found both theoretically and experimentally that similar ``$\FI$-type" objects often have very regular behaviors in their \emph{extremal} invariants.

For instance, it is proven in \cite{N} that a finitely generated $\FI$-module over the $\FI$-algebra $A_n = \Q[x_1,\ldots,x_n]$ will have krull dimension that is linear as a function in $n$. It is also discussed in that paper how it has been conjectured that the projective dimensions of these modules will also be linear in $n$. Other work also suggests that ideals in this ring must have linearly growing regularity in $n$ \cite{HHQ}. More recently, Knudsen, Miller, and Tosteson \cite{KMT} proved that the 
dimensions of finite codimension homology groups of configuration spaces of manifolds have quasi-polynomial behavior.

In the work of \cite{N}, the relevant proof strategy is to encode the Hilbert polynomials of these modules as generating functions for certain regular languages. The paper \cite{KMT} approaches its subject material through homological stability of Lie Algebras. Both of these approaches are quite different, and do not suggest a single underlying mechanism to explain why their respective extremal invariants behave so regularly. our approach in this work will ultimately most closely resemble that of \cite{N}. The results of this work, as well as its main conjecture, therefore continues these trends.

\subsection*{Acknowledgements}
The first author was supported by a Gibbons Fellowship from the Bowdoin College. The second author was supported by NSF grant DMS-2137628.

\section{Background}

\subsection{Graph theory and edge ideals}

Though the graphs we use in this paper are all finite simple graphs, we should make the definition more explicit:
\begin{definition}
A (finite, simple) \textbf{graph} is an ordered pair $G = \bigl(V(G), E(G)\bigr)$, where $V(G)$ is a finite set of \textbf{vertices}, and $E(G)$ is a finite set of pairs of vertices called the \textbf{edges} of $G$. Note that by virtue of $E(G)$ being a set of pairs, we are implicitly disallowing both loops and multiedges in our graphs. 

A \textbf{homomorphism} between two graphs $G,G'$ is a map between their vertex sets that sends edges to edges.
\end{definition}

\begin{example}
Let $G = K_n$ be a complete graph of n vertices. Then the vertex set $V(G) = \{v_i$ | $i \in \mathbb{N}, 1 \leq i \leq n\}$ and the edge set $E(G) = \bigl\{\{v_i, v_j\}$ | $i, j \in \mathbb{N}, 1 \leq i < j \leq n\bigr\}$.
\end{example}

\begin{example}
Let $G = KG_{n,2}$ be a Kneser graph whose vertices are labelled by pairs of distinct natural numbers not greater than n. Then $V(G) = \{v_{i,j}$ | $i, j \in \mathbb{N}, 1 \leq i < j \leq n\}$ and $E(G) = \bigl\{\{v_{i,j}, v_{k,l}\}$ | $1 \leq i < j \leq n, 1 \leq k < l \leq n, \{i, j\} \cap \{k, l\} = \emptyset\bigr\}$.
\end{example}

\begin{example}
Let $G = \bigsqcup_{i=1}^{2} KG_{n,2}$ be a disjoint union of two Kneser graphs in previous example. Then we can set $V(G) = \{v_{o,i,j}$ | $o \in \{1,2\}, i, j \in \N, 1 \leq i < j \leq n\}$ where this extra $o$ represents which Kneser graph is this vertex from. Consequently, $E(G) = \bigl\{\{v_{o,i,j}, v_{o,k,l}\}$ | $o \in \{1,2\}, 1 \leq i < j \leq n, 1 \leq k < l \leq n,$ $\{i, j\} \cap \{k, l\} = \emptyset\bigr\}$.
\end{example}

Notice that "$i$", "$i,j$" and "$o,i,j$" each represents a label of a vertex in each of the three examples above. 

The primary graph theoretic invariant of interest in this paper is what is known as the independence number of the graph.

\begin{definition}
    Given a graph $G = (V(G),E(G))$, the \textbf{independence number} $\alpha(G)$ of $G$ is the size of the largest collection of vertices that are pair-wise not adjacent.
\end{definition}

The independence number is an extremely classical invariant that has seen interest from a number of different perspectives and fields. For instance, it is known that the computation of this invariant is strongly NP-complete \cite{GJ}. We will need to understand the independence number from an algebraic perspective.

\begin{definition}\label{polynomialringdef}
Let $G = \bigl(V(G), E(G)\bigr)$ be a graph. The \textbf{polynomial ring with respect to $G$} is the polynomial ring
\[
R_G = \mathbb{Q}[x_{L_1}, x_{L_2}, ..., x_{L_n}]
\] 
where each $x_{L_i}$ corresponds to a $v_{L_i} \in V(G)$ where $L_i$ is a label; in other words, each vertex in $G$ represents a variable of $R_G$.
\end{definition}

For the purposes of this work, the above algebra is relevant due to the presence of a particular ideal that we define now.

\begin{definition}
Let $G = (V(G), E(G))$ be a graph and $R_G$ be the polynomial ring with respect to $G$. The \textbf{edge ideal of $R_G$} is the ideal of $R_G$ defined by
\[
I_G = \bigl(x_{L_i}x_{L_j} \text{ } | \text{ } \{v_{L_i}, v_{L_j}\} \in E(G)\bigr).
\] 
In other words, $I_G$ is generated by square-free monomials of degree $2$ that represent connected pairs of vertices.
\end{definition}

One immediately observes from this definition that a square-free monomial is \emph{not} included in the edge idea if and only if every pair of vertices that appears in the monomial are not connected by an edge. This observation is the key insight for the following classical theorem.

\begin{theorem}\cite[Corollary 1.15]{MS}\label{thmdimR/I}
Let $G$ be a graph, and let $R_G$ and $I_G$ be the polynomial ring and edge ideal of $G$, respectively. Then the independence number of $G$ is equal to the Krull dimension of the quotient ring $(R/I)_G := R_G/I_G$. That is to say, it is equal to the longest proper chain of prime ideals in $(R/I)_G$.
\end{theorem}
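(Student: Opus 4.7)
The plan is to reduce the computation of the Krull dimension of $R_G/I_G$ to a purely combinatorial counting problem via the standard correspondence between square-free monomial ideals and combinatorial structures on the underlying vertex set.

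First, since $I_G$ is a square-free monomial ideal, I would argue that every minimal prime over $I_G$ is generated by a subset of variables. Concretely, for each $S \subseteq V(G)$, write $P_S = (x_{L_i} : v_{L_i} \in S)$. This is a prime ideal because $R_G/P_S$ is itself a polynomial ring on $|V(G)| - |S|$ variables. One then checks that $I_G \subseteq P_S$ if and only if, for every edge $\{v_{L_i}, v_{L_j}\} \in E(G)$, at least one of $v_{L_i}, v_{L_j}$ lies in $S$, i.e., if and only if $S$ is a \emph{vertex cover} of $G$. For the converse direction, given any monomial prime $P$ containing $I_G$, take $S$ to be the set of variables in $P$; minimality of $P$ forces $S$ to be a minimal vertex cover.

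Second, using that the Krull dimension of a quotient by an ideal equals the maximum of the Krull dimensions of quotients by minimal primes over that ideal, I would conclude
\[
\dim(R_G/I_G) = \max_{S \text{ vertex cover}} \dim(R_G/P_S) = \max_{S \text{ vertex cover}} \bigl(|V(G)| - |S|\bigr) = |V(G)| - \tau(G),
\]
where $\tau(G)$ denotes the minimum size of a vertex cover of $G$.

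Finally, I would invoke the elementary Gallai identity: a set $T \subseteq V(G)$ is independent if and only if its complement $V(G) \setminus T$ is a vertex cover. This gives $\alpha(G) + \tau(G) = |V(G)|$, so the displayed equation becomes $\dim(R_G/I_G) = \alpha(G)$, as desired.

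There is no real obstacle here; this is a classical statement. The only point requiring mild care is the first step, namely the identification of the minimal primes of a square-free monomial ideal as those generated by subsets of variables indexed by minimal vertex covers. This follows from the standard fact that the associated primes of a square-free monomial ideal are all monomial primes, which in turn relies on primary decomposition in the setting of monomial ideals.
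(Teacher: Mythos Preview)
Your argument is correct and is the standard route to this classical fact: identify the minimal primes of the square-free monomial ideal $I_G$ with the primes $P_S$ generated by variables indexed by minimal vertex covers $S$, compute $\dim(R_G/P_S) = |V(G)| - |S|$, and finish with the Gallai identity $\alpha(G) + \tau(G) = |V(G)|$.

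As for comparison with the paper: there is nothing to compare. The paper does not prove this statement at all; it simply cites it as \cite[Corollary 1.15]{MS} and moves on. Your write-up is therefore more than what the paper itself offers. If anything, the only small polish would be to make explicit why \emph{every} minimal prime of a square-free monomial ideal is of the form $P_S$ (not just that such $P_S$ are among the minimal primes); you gesture at this in your last paragraph, and a one-line appeal to the fact that the radical of a monomial ideal is an intersection of monomial primes suffices.
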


\begin{example}
Let $G = K_5$. Then $R_G = \mathbb{Q}[x_1, x_2, x_3, x_4, x_5]$ and,
\[
I_G = (x_1x_2, x_1x_3, x_1x_4, x_1x_5, x_2x_3, x_2x_4, x_2x_5, x_3x_4, x_3x_5, x_4x_5).
\]
The only prime ideals of the quotient ring $R_G/I_G$ are those prime ideals of $R_G$ which contain $I_G$. In particular, any prime ideal of the quotient ring must contain at least four of the five variables by virtue of it needing to contain all of the necessary products. It therefore follows that the longest (proper) chain of prime ideals in the quotient has length 1. Clearly the independence number of the complete graph $K_5$ is also 1.
\end{example}

\begin{example}
Let $G = KG_{4,2}$. Then $R_G = \mathbb{Q}[x_{1,2}, x_{1,3}, x_{1,4}, x_{2,3}, x_{2,4}, x_{3,4}]$ and $I_G = (x_{1,2}x_{3,4},$ $x_{1,3}x_{2,4}, x_{1,4}x_{2,3})$. In other words, $G$ is a disjoint union of 3 edges. Graph theoretically it immediately follows that the independence number of $G$ is 3.

Algebraically, any prime ideal containing the edge ideal must contain at least three of the six variables. It follows that the Krull dimension in this case is three, as expected.
\end{example}

\subsection{$\FI$-graphs and $\FI$-modules}

This final background section serves to provide formal definitions of $\FI$-graphs, $\FI$-modules, etc., and how they are related to each other. 

\begin{definition}\label{FIgraphdef}
The category whose objects are all the finite sets $[n] := \{1,...,n\}$ and whose morphisms are set injections from $[m]$ to $[n]$ for any $m \leq n$ is called $\textbf{FI}$.

An \textbf{$\FI$-graph} is a functor $G_\bullet$ from the category $\FI$ to the category \textbf{Graph} of graphs and graph homomorphisms. We will often use $G_f$ to denote the graph homomorphism induced by the injection $f$.
\end{definition}

Intuitively, an $\FI$-graph is a collection of graphs $G_n$, each acted on by the corresponding symmetric group $\Sn_n$, which are compatible with one another according to the homomorphisms induced from set injections. Of course, without more restrictions, one quickly finds themselves with examples of $\FI$-graphs that are entirely unwieldy to work with.

\begin{example}
Let $G_0 \subseteq G_1 \subseteq G_2 \subseteq \ldots$ be any chain of graphs thought of as acted on by the corresponding symmetric group trivially. Then one may define an $\FI$-graph using the above by setting $G_f$ to be the inclusion given by the chain.
\end{example}

This example illustrates that without imposing more conditions, one can define $\FI$-graphs that almost entirely skirt all of the extra algebraic structure that we hope to impose. To overcome this issue we consider the following.

\begin{definition}
An $\FI$-graph is called \textbf{finitely generated} if for every $n \gg 0$ and every vertex $v \in V(G_{n+1})$, there is a vertex $v' \in V(G_n)$ and an injection $f: [n] \hookrightarrow [n+1]$ such that $G_f(v') = v$. 
\end{definition}

\textbf{For the remainder of this work, every $\FI$-graph will be finitely generated.}

Collections of complete graphs, complete bipartite graphs, and Kneser graphs are all classical examples of finitely generated $\FI$-graphs. One of the most significant properties of $\FI$-graphs is that they are particularly well suited for enumeration questions.

\begin{theorem} \cite[Theorem B]{RW}
Let $G_\bullet$ be an FI-graph, and let $H$ be any fixed graph. Then for $n \gg 0$, the function
\[
n \mapsto \text{ the number of copies of $H$ that appear as subgraphs of $G_n$}
\]
agrees with a polynomial. In particular, the number of vertices in an $\FI$-graph is in eventual agreement with a polynomial.
\end{theorem}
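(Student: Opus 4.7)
The plan is to translate the counting problem into the language of $\FI$-modules and then invoke the fundamental Church-Ellenberg-Farb structure theorem, which asserts that the dimension function of a finitely generated $\FI$-module over $\mathbb{Q}$ is eventually a polynomial in $n$.

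First, I would encode the count as the dimension of an $\FI$-module. For each $n$, let $W_n$ be the set of injective graph homomorphisms $\phi : H \hookrightarrow G_n$. Each $\FI$-morphism $f : [n] \hookrightarrow [m]$ produces a graph homomorphism $G_f : G_n \to G_m$ which, in the finitely generated $\FI$-graph setting, is injective on vertices; post-composition with $G_f$ therefore gives a map $W_n \to W_m$, making $W_\bullet$ a functor $\FI \to \mathbf{Set}$. Let $M_\bullet$ be the associated $\FI$-module $M_n := \mathbb{Q}[W_n]$, so that $\dim_{\mathbb{Q}} M_n = |W_n|$. Since the number of subgraph copies of $H$ in $G_n$ equals $|W_n|/|\Aut(H)|$, it suffices to prove that $\dim M_n$ is eventually polynomial.

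Next, I would show that $M_\bullet$ is finitely generated as an $\FI$-module. The finite generation hypothesis on $G_\bullet$ gives, for $n \gg 0$, that every vertex of $G_{n+1}$ is an $\Sn_{n+1}$-translate of some vertex of $G_n$, so by iteration every vertex of $G_n$ lies in the $\FI$-orbit of a vertex in some fixed finite collection inside $G_{n_0}$. I would then promote this from single vertices to $|V(H)|$-element tuples of distinct vertices: the set of $\Sn_n$-orbits of such tuples stabilizes as $n$ grows, yielding finitely many orbit representatives. Each such representative carries only finitely many graph embeddings of $H$ into the corresponding induced subgraph, producing a finite list $\phi_1, \ldots, \phi_r$ of embeddings (each in some $G_{n_i}$) so that every $\phi \in W_n$ has the form $G_f \circ \phi_i$ for suitable $f$ and $i$. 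This exhibits $M_\bullet$ as finitely generated.

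Finally, applying the Church-Ellenberg-Farb polynomial-growth theorem to $M_\bullet$ concludes that $n \mapsto |W_n|$ agrees with a polynomial for $n \gg 0$, whence so does the subgraph count $|W_n|/|\Aut(H)|$. The ``in particular'' clause about vertex counts is the special case where $H$ is a single vertex. The main obstacle is the finite generation step: one must carefully bootstrap the $\FI$-generation assumption from individual vertices to tuples of vertices carrying prescribed induced-subgraph structure, ensuring the stabilization of the relevant orbit sets. Once that is in hand, the machinery of $\FI$-modules does the rest.
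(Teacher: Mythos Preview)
The paper does not actually prove this theorem: it is quoted as a background result from \cite{RW}, and the only indication of method is the sentence ``Using the techniques in \cite[Theorem 4.1]{RW} one can obtain Theorem~\ref{thmofsubgraph},'' together with the reference to \cite[Theorem 1.5]{CEF} for polynomial growth of dimensions of finitely generated $\FI$-modules. Your proposal is precisely this strategy---linearize the $\FI$-set of embeddings $H\hookrightarrow G_n$, argue finite generation, and apply Church--Ellenberg--Farb---so it matches the approach the paper gestures at.

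One small caution: you assert that in the finitely generated setting the transition maps $G_f$ are injective on vertices, and use this to ensure that post-composition preserves injectivity of $\phi$. The paper's formal Definition~\ref{FIgraphdef} only asks for a functor to graphs and graph homomorphisms, so injectivity of $G_f$ is not automatic from the definition alone (even though the introduction informally describes nested graphs). In practice this is handled either by building eventual injectivity into the hypotheses, or by working directly with the $\FI$-set of subgraph copies rather than of injective maps; either way it is a detail to pin down rather than a genuine obstruction.
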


The last part of the previous theorem now gives us a means of stratifying the class of $\FI$-graphs in terms of the growth rate of their vertex sets. This leads to the following definition.

\begin{definition}\label{defVertexLinearFIGraph}
Let $G_\bullet$ be an $\FI$-graph. We say $G_\bullet$ is \textbf{vertex-linear} if for all $n \gg 0$, there are integers $A$, $B$ such that 
\[
|V(G_n)| = An + B.
\] 
\end{definition}

\begin{example}\label{exampleKnesergraph}
The collection of Kneser graphs $G_\bullet = KG_{\bullet,r}$ has $|V(G_n)| = \binom{n}{r}$, which is a polynomial of degree $r$. Therefore the Kneser graph is \textbf{not} vertex-linear for all $r \geq 2$. When $r=1$, $KG_{\bullet,1} = K_\bullet$ is the complete graph and is vertex-linear ($|V(K_n)| = n$).
\end{example}

\begin{example}
The collection of complete bipartite graphs $G_\bullet = K_{\bullet, \bullet}$ has $|V(G_n)| = 2n$, which is also vertex-linear.
\end{example}

\begin{example}
Let $(H,v)$ be an arbitrary graph with a selected vertex $v$, and let $G_n = \wedge{i=1}^{n}H$ be $n$-times wedge product of $H$ with itself. Then $G_\bullet$ is a vertex-linear $\FI$-graph with $|V(G_n)| = n \cdot (|V(H)|-1) + 1$.
\end{example}

The notion of \emph{edge}-linear $\FI$-graphs was first introduced in \cite{R}. In that work all such $\FI$-graphs were classified. One nice property of vertex-linear $\FI$-graphs is that there is also relatively simple classification theorem for them; in other words, for $n$ sufficiently large, all $G_n$ can be derived from a classification graph. The classification theorem for vertex-linear $\FI$-graphs will be included when we discuss the classification theorem for the slightly-complicated \textbf{vertex-quadratic} $\FI$-graphs in section \ref{generalcase}. 

The algebraic approach to \textbf{Theorem \ref{mainthm}} relies on the following algebraic structure: 

\begin{definition}
An \textbf{$\FI$-module over $\Q$} is a functor $\M_\bullet$ from the category $\FI$ to the category of vector spaces over $\Q$. Use $M_f$ to denote the linear transformation induced by the injection $f$. We say $M$ is \textbf{finitely generated} if there is a finite set $\{w_1,...,w_t$ | $w_i \in \M_{k_i}$ for some $k_i \in \N\}$ such that for any $n$, every element $m \in \M_n$ can be written as a finite sum
\[
m = \sum{a_iM_{f_i}(m_i)}
\]
where $a_i \in \Q$, $m_i = w_j$ for some $j$, and $f_i$ is some injection from $[k_j]$ to $[n]$.
\end{definition}

The following example shows a close connection between $\FI$-modules and $\FI$-graphs:

\begin{example}
Let $G_\bullet$ be any $\FI$-graph. Then one can obtain a (finitely generated) $\FI$-module $\M_\bullet$ where each $\M_n$ is a vector space over $\Q$ with basis indexed by the vertices of $G_n$ and transition maps send elements from basis to basis in the obvious fashion.
\end{example}

There are some nice counting invariants related to $\FI$-modules and $\FI$-graphs; for example, \cite[Theorem 1.5]{CEF} indicates that for any finitely generated $\FI$-module $\M_\bullet$, the $\Q$-dimension of $\M_n$ eventually agrees with a polynomial $p(n)$. Using the techniques in \cite[Theorem 4.1]{RW} one can obtain Theorem \ref{thmofsubgraph}.

While the above counting variants look relatively simple, the extremal invariants, such as the independence number $\alpha(G_n)$, are far from being the same. For instance, one can easily see that these Invariants cannot all be described by polynomials. We have also discussed that the independence number of the complement of the collection of Kneser graphs $KG_{n,2}$; in this case, $\alpha(G_n) = \lfloor \frac{n}{2} \rfloor$, which is clearly not a polynomial. However, it turns out that the independence number of any vertex-linear $\FI$-graph will indeed grow like a linear polynomial, as elaborated in the next chapter.

\section{Proof of Theorem \ref{mainthm}} \label{proofofmainthm}

The $\FI$-modules introduced in the previous chapter are collections of $\Q$-modules. The $\FI$-module we will use in the proof relies on a more sophisticated structure called an $\FI$-algebra. 

\begin{definition}
An $\FI$-algebra over $\Q$ is a functor $\A_\bullet$ from the category $\FI$ to the category of commutative $\Q$-algebras. Use $A_f$ to denote the $\Q$-algebra homomorphism induced by the injection $f$.
\end{definition}

\begin{example}
Let $\A_n = \Q$ for each n, and let each $A_f$ to be an identity map. Then $\A_\bullet$ itself is an $\FI$-algebra over $\Q$. 
\end{example}

\begin{example}\label{algebraexample}
Let $\B_n = \Q[x_1,...,x_n]$ be a polynomial ring over $\Q$ with $n$ variables for each n, and let each $A_f$ to be the transition map that permutes all $x_i$'s in the obvious way. Then $\B_\bullet$ is an $\FI$-algebra over $\Q$. 
\end{example}

\textbf{Throughout the remainder of the chapter,} we always use $\B_\bullet$ to represent the $\FI$-algebra in the above example.

\begin{definition}
Let $\A_\bullet$ be an $\FI$-algebra, and let any $d \in \N$. We define a \textbf{tensor power of $\A_\bullet$}, denoted $(\A^{{\otimes}d})_\bullet$, such that
\[
(\A^{{\otimes}d})_n := (\A_n)^{{\otimes}d}
\]
for each n. 
\end{definition}

\begin{example}\label{tensorexample}
It is not hard to see that $(\A^{{\otimes}d})_\bullet$ defined above is an $\FI$-algebra for any $d \in \N$. In particular, $(\B^{{\otimes}d})_\bullet$ can be view as a collection of polynomial rings over $\Q$
\[
(\B^{{\otimes}d})_n = \Q[x_{i,j} | 1 \leq i \leq d, 1 \leq j \leq n]
\]
where each $A_f$ is the transition map that fixes each $i'$ and permutes all $x_{i',j}$ in the obvious way.
\end{example}

\begin{definition}\label{FImoduleoverFIalgebradef}
Let $\A_\bullet$ be an $\FI$-algebra. An $\FI$-module over $\A_\bullet$ is an $\FI$-module $\M_\bullet$, satisfying the following: 

\begin{itemize}
\item Each $\M_n$ is an $\A_n$-module;
\item Use $M_f$ to denote the $\A_k$-module homomorphism for any injection $f$ from $[k]$ to $[n]$, such that for any $m \in \M_k$ and $a \in \A_k$, 
\[
M_f(a \cdot m) = A_f(a) \cdot M_f(m) \in \M_n.
\]
    
\end{itemize}
We say $\M_\bullet$ is \textbf{finitely generated} if there is a finite set $H \subset \bigsqcup_{i=1}^{\infty} \M_i$ such that for any element $m$ in any $\M_i$, it can be written as a finite sum
\[
m = \sum_{j}a_jM_{f_j}(h_j)
\]
where $h_j$'s are (not necessarily distinct) elements in $H$, $f_j$'s are some transition maps to $[i]$, and $a_j$'s are elements in the $\Q$-algebra $\A_i$.

\end{definition}

\begin{example}\label{exampletensorB}
Let $\M_n = \B_n$ for each n, and let each $M_f$ to be the transition map that permutes all $x_i$'s in the obvious way, just as $A_f$ in Example \ref{algebraexample}. Then $\M_\bullet$ is an $\FI$-module over $\B_\bullet$. More generally, let $\M_n = (\B^{{\otimes}d})_n$, and each $M_f$ to behave the same way as $A_f$ in Example \ref{tensorexample}, then $\M_\bullet$ is an $\FI$-module over $(\B^{{\otimes}d})_\bullet$.
\end{example}

It is known that a finitely generated $\FI$-module $\M_\bullet$ over $(\B^{{\otimes}d})_\bullet$ has a surprising linear property on the Krull dimension of each $\M_n$, which was proven by constructing a Hilbert series on (vector space) dimensions of graded pieces of each $\M_n$.

\begin{theorem}\cite[Theorem 5.16]{N}\label{linearkrulldimthm}
Let $\M_\bullet$ be a finitely generated $\FI$-module over $(\B^{{\otimes}d})_\bullet$. Then there are integer constants $A$, $B$ such that for each $n \gg 0$, the Krull dimension
\[
\dim \M_n = An + B.
\]
\end{theorem}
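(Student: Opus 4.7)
The plan is to reduce the question to an analysis of Hilbert series and to package them together using the FI-structure to produce a bivariate rational generating function, from which the Krull dimension can be read off as a pole order. Since $(\B^{\otimes d})_n = \Q[x_{i,j} \mid 1 \le i \le d,\, 1 \le j \le n]$ is a polynomial ring, after choosing a graded presentation of $\M_\bullet$ one can regard each $\M_n$ as a finitely generated graded $(\B^{\otimes d})_n$-module, and $\dim \M_n$ equals the order of the pole at $t=1$ of the graded Hilbert series $H_n(t) = \sum_{k \ge 0} \dim_\Q (\M_n)_k\, t^k$. The theorem therefore amounts to showing that this pole order is an eventually affine-linear function of $n$.

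Next I would assemble these Hilbert series into a bivariate generating function $F(s,t) = \sum_{n \ge 0} H_n(t)\, s^n$ and prove that it is rational of a controlled shape. The essential input is the Noetherianity of finitely generated $\FI$-modules over $(\B^{\otimes d})_\bullet$: a finite generating set of $\M_\bullet$ produces a surjection from a finite direct sum of standard induced modules, whose kernel is again finitely generated, and iterating yields a finite-length resolution by induced modules. It therefore suffices to compute $F(s,t)$ for a single induced module, where it has a tractable closed form whose denominator is built from factors of the form $(1-t)^{a}$ and $(1 - s\, t^{b})$, with $a$ and $b$ determined by the arity of the generator.

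The main step is then a combinatorial analysis of this rational function: expanding near $t = 1$ as $F(s,t) = \sum_{j \ge 0} c_j(s)(1-t)^{-j}$, one has $\dim \M_n = \max\{\, j : [s^n]\, c_j(s) \neq 0 \,\}$. Because each $c_j(s)$ is a rational function with nonnegative integer coefficients, this maximum is eventually an affine-linear function of $n$. The hardest point is controlling the cancellation between the $c_j(s)$ as one passes from the induced modules in the resolution to $\M_\bullet$ itself, and ruling out periodic fluctuations and sub-linear corrections in the pole order. A cleaner and more combinatorial alternative, closer to the strategy of \cite{N}, is to realize a monomial basis of $\M_n$ as the set of accepted words of a fixed DFA, so that $F(s,t)$ becomes the generating function of a regular language weighted by word length and degree; the pole order at $t = 1$ is then a tropical invariant of the automaton whose linear behavior in $n$ can be extracted directly from its cycle structure.
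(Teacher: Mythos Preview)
This theorem is not proved in the paper at all: it is quoted verbatim from \cite[Theorem~5.16]{N}, and the paper only uses it as a black box (the surrounding text says it ``was proven by constructing a Hilbert series on (vector space) dimensions of graded pieces of each $\M_n$'' and, elsewhere, that the strategy in \cite{N} ``is to encode the Hilbert polynomials of these modules as generating functions for certain regular languages''). So there is no paper-internal proof to compare against; I can only assess your sketch on its own merits and against what the paper tells us about \cite{N}.

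Your outline has the right ingredients (graded Hilbert series, Noetherianity, bivariate generating function), but the main line of argument has a real gap precisely where you flag it. After resolving $\M_\bullet$ by induced modules, the bivariate series $F(s,t)$ is an \emph{alternating} sum of the induced pieces, so your assertion that ``each $c_j(s)$ is a rational function with nonnegative integer coefficients'' is not justified; the Laurent coefficients at $t=1$ of an alternating sum of positive series need not be positive. Without that positivity, the step ``$\dim \M_n = \max\{\, j : [s^n]\, c_j(s) \neq 0 \,\}$ is eventually affine'' is exactly the hard part, and you have not supplied a mechanism to rule out cancellations or periodic drops in the pole order. In short, the resolution-based route you propose reduces the theorem to a statement you have not proved.

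The alternative you mention at the end --- realizing a monomial basis of $\M_n$ as the accepted words of a fixed finite automaton and reading the Krull dimension from the automaton's cycle structure --- is, according to the paper's own description, the actual method of \cite{N}. That route sidesteps the cancellation problem entirely because it produces $F(s,t)$ directly as a positive sum (counting monomials), not as an Euler characteristic. If you want a complete argument, that is the path to flesh out; the resolution approach, as written, does not close.
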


The next goal is to find a way to convert any vertex-linear $\FI$-graph to a well-defined $\FI$-module. 

\begin{theorem}\cite[Theorem A]{RSW}\label{theoremforlinearFIset}
Let $G_\bullet$ be a finitely generated vertex-linear $\FI$-graph with $|V(G_n)| = An+C$ for all $n \gg 0$, where $A,C \in \N$. Then there exists a number $d \in \N$ such that for every $n \geq d$, the vertex set $V(G_n)$ is isomorphic to a disjoint union of $A$ number of copies of the $\Sn_n$ action on $[n]$ along with $C$ number of $\Sn_n$-invariant vertices, also called "singletons". We say that $G_\bullet$ has vertex-linear stable degree $\leq d$ if $d$ is the smallest number satisfying the condition above.
\end{theorem}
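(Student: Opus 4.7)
The plan is to view the assignment $n \mapsto V(G_n)$ as a finitely generated $\FI$-set $V_\bullet$ and exploit the structure theory of such objects. Any finitely generated $\FI$-set decomposes, for $n$ sufficiently large, into a disjoint union of transitive $\Sn_n$-orbits, each of which is isomorphic as an $\FI$-set to a quotient of the form $\FI([k],-)/H$ for some $k \geq 0$ and some subgroup $H \leq \Sn_k$. At level $n \geq k$, such an orbit has cardinality $\tfrac{n(n-1)\cdots(n-k+1)}{|H|}$, which is a polynomial of degree exactly $k$ in $n$.

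First I would invoke this decomposition and argue that, since $|V(G_n)| = An + C$ is linear in $n$, no orbit of polynomial degree $k \geq 2$ can appear for $n \gg 0$, because its leading term would dominate the total. Hence only orbits of degree $0$ and $1$ contribute. A degree-$0$ orbit is an $\Sn_n$-invariant singleton, contributing exactly $1$ vertex. A degree-$1$ orbit must be of the form $\FI([1],-)/H$ with $H \leq \Sn_1 = \{e\}$, hence $H$ is trivial and the orbit is a copy of the natural $\Sn_n$-set on $[n]$. Matching leading coefficient and constant term then forces exactly $A$ orbits of the second kind and $C$ orbits of the first.

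Next I would verify that this decomposition is compatible with the full $\FI$-structure, not merely with the symmetric-group actions at each level. This follows because each orbit type is itself an $\FI$-subfunctor of $V_\bullet$, and the finite generation hypothesis together with the enumerative theorem of \cite{RW} provides a uniform bound $d$, controlled by the largest level at which a generator lives, past which no new orbit types appear and the isomorphism type of the decomposition stabilizes. Setting $d$ to be the smallest such bound yields the claimed vertex-linear stable degree.

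The main obstacle, in my view, is less the orbit counting than the careful ruling out of degree-$\geq 2$ contributions: a priori one might worry about polynomial cancellation, but because we are working with honest $\FI$-sets rather than virtual objects, every orbit contributes nonnegatively to the cardinality, so a single orbit of degree $\geq 2$ would eventually force $|V(G_n)|$ to exceed any linear function. This monotonicity is what makes the linear growth hypothesis so rigid and is the key leverage for pinning down the structure precisely.
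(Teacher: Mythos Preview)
The paper does not actually prove this theorem: it is quoted verbatim as \cite[Theorem A]{RSW} and used as a black box, with no accompanying argument. So there is no proof in the paper to compare against.

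That said, your sketch is a correct reconstruction of the standard argument, and is almost certainly the content of the cited result. The decomposition of a finitely generated $\FI$-set into orbits of the form $\FI([k],-)/H$, each contributing a size polynomial of degree exactly $k$, together with the observation that in an honest $\FI$-set these contributions are nonnegative and hence cannot cancel, is exactly what forces $k \leq 1$ under linear growth. Your point that $H \leq \Sn_1$ must be trivial, so a degree-$1$ orbit is literally $[n]$, is the key step that pins down the structure precisely rather than just up to growth rate. The coefficient-matching for $A$ and $C$ then follows immediately.

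One small comment: in your last paragraph you describe the compatibility with the full $\FI$-structure and the existence of a uniform $d$ as things to be ``verified,'' but in fact these are already part of the structure theorem for finitely generated $\FI$-sets (the orbit decomposition is as $\FI$-subfunctors, not merely levelwise), so no extra work is needed there beyond invoking that theorem. Your instinct about the ``main obstacle'' is right, though: the nonnegativity/no-cancellation observation is what makes the argument go through, and it is worth stating explicitly as you do.
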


Notice that every injection from $[k]$ to $[l]$ can be view as a composition of the standard embedding from $[k]$ to $[l]$ and a permutation of $[l]$. Since an $\FI$-graph is a functor between categories and therefore respects function compositions, every $k,l \geq d$, there is a bijective correspondence between copies of $[k]$'s in $V(G_k)$ and $[l]$'s in $V(G_l)$ (as well as the singletons in both vertex sets). The structure of $V(G_n)$ enables us to construct the polynomial ring with respect to $G_n$ (see definition \ref{polynomialringdef}) for each $n \geq d$:
\[
R_{G_n} = \Q[x_{1,1},...,x_{1,n},x_{2,1},...,x_{2,n},...,x_{A,1},...,x_{A,n},y_1,y_2...,y_C].
\]

Before we construct an $\FI$-module for this collection of $R_{G_n}$'s, we should slightly modify the $\FI$-graph $G_\bullet$ so that the new $\FI$-graph $\Gbar_\bullet$ is vertex-linear since the very beginning, and every $V(\Gbar_n)$ satisfies the condition described in the above theorem. 

\begin{definition}\label{defGbar}
Let $G_\bullet$ be a vertex-linear $\FI$-graph with vertex-linear stable degree $\leq d$. We define $\Gbar_\bullet$ as an $\FI$-graph where $\Gbar_n = G_n$ for every $n \geq d$, and $\Gbar_n$ is an empty graph whose vertex set $V(G_n)$ satisfies the condition in Theorem \ref{theoremforlinearFIset} for $n < d$. The use of empty graphs ensures that $\Gbar_\bullet$ satisfies the "graph homomorphism" property of an $\FI$-graph. In particular, $\Gbar_\bullet$ is a vertex-linear $\FI$-graph with stable degree $\leq 1$.
\end{definition}

\begin{lemma}\label{lemmatensorBnmodule}
Let $G_n$ be a vertex-linear $\FI$-graph. Using the $\Gbar_n$ defined above, let $\M_n = R_{\Gbar_n}$ for each $n \in \N$. Then each $\M_n$ is a $(\B^{{\otimes}(A+C)})_n$-module.
\end{lemma}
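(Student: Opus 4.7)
The plan is to produce, for each $n$, a surjective $\Q$-algebra homomorphism from $(\B^{\otimes(A+C)})_n$ onto $R_{\Gbar_n}$; any such map automatically endows $R_{\Gbar_n}$ with the structure of a $(\B^{\otimes(A+C)})_n$-module via restriction of scalars, which is exactly what the lemma asks for.

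First I would unpack $R_{\Gbar_n}$ using Theorem \ref{theoremforlinearFIset}. Since $\Gbar_\bullet$ was constructed in Definition \ref{defGbar} to be vertex-linear of stable degree $\leq 1$, for every $n \geq 0$ the vertex set $V(\Gbar_n)$ is isomorphic to $\bigsqcup_{i=1}^A [n] \sqcup \{s_1,\ldots,s_C\}$ as an $\Sn_n$-set. By Definition \ref{polynomialringdef}, this identifies
\[
R_{\Gbar_n} = \Q[x_{i,j},\, y_k : 1 \leq i \leq A,\, 1 \leq j \leq n,\, 1 \leq k \leq C],
\]
where $x_{i,j}$ corresponds to the $j$-th vertex in the $i$-th copy of $[n]$, and $y_k$ corresponds to the $k$-th singleton.

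Second, by Example \ref{tensorexample} I may write
\[
(\B^{\otimes(A+C)})_n = \Q[z_{i,j} : 1 \leq i \leq A+C,\, 1 \leq j \leq n],
\]
and I would define a $\Q$-algebra map $\varphi_n \colon (\B^{\otimes(A+C)})_n \to R_{\Gbar_n}$ on generators by
\[
\varphi_n(z_{i,j}) = \begin{cases} x_{i,j} & \text{if } 1 \leq i \leq A, \\ y_{i-A} & \text{if } A+1 \leq i \leq A+C. \end{cases}
\]
Because $(\B^{\otimes(A+C)})_n$ is freely generated as a commutative $\Q$-algebra by the $z_{i,j}$, this specification extends uniquely to a surjective $\Q$-algebra homomorphism, and pulling back scalars along $\varphi_n$ then gives $R_{\Gbar_n}$ the required $(\B^{\otimes(A+C)})_n$-module structure.

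The only real choice in the argument is how the last $C$ tensor factors of $\B_n$ should act on the singleton variables: since each $y_k$ is $\Sn_n$-invariant, the natural move is to collapse all $n$ generators of the $(A+k)$-th tensor factor of $\B_n$ onto the single variable $y_k$. I do not anticipate a genuine obstacle at the level of this lemma, which is essentially definitional once the right map is written down. I do, however, expect this collapsing step to be the one point requiring real care later, when these pointwise module structures are bundled into a finitely generated $\FI$-module over $(\B^{\otimes(A+C)})_\bullet$ in order to apply Theorem \ref{linearkrulldimthm}, since one must then check that the $\varphi_n$ are compatible with the transition maps induced by arbitrary injections $f \colon [m] \to [n]$; for the present statement no such compatibility needs to be verified.
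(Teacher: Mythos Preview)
Your proposal is correct and follows essentially the same approach as the paper: you construct the same surjective $\Q$-algebra homomorphism $(\B^{\otimes(A+C)})_n \to R_{\Gbar_n}$ that sends the first $A$ tensor factors identically onto the $x_{i,j}$ and collapses each of the last $C$ tensor factors onto a single $y_k$, then obtain the module structure by restriction of scalars. The paper additionally records the kernel explicitly as $(y_{i,k}-y_{i,j})$, which it uses in the subsequent lemma, but for the present statement your argument is complete and matches the paper's.
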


\begin{proof}
Recall that $(\B^{{\otimes}(A+C)})_n$ can be written as 
\[
(\B^{{\otimes}(A+C)})_n = \Q[x_{1,1},...,x_{1,n},x_{2,1},...,x_{2,n},...,x_{A,1},...,x_{A,n},y_{1,1},...,y_{1,n},...,y_{C,1},...,y_{C,n}].
\]
We can find a surjective ring homomorphism $\phi_n:$ $(\B^{{\otimes}(A+C)})_n \longrightarrow R_{\Gbar_n}$ which kills the second index of the label of every $y$'s, namely, for every $i,j$,
\[
\phi_n(x_{i,j}) = x_{i,j} 
\]
and
\[
\phi_n(y_{i,j}) = y_i
\]
In this case, it is not hard to see that the kernel is the ideal generated by differences of each pair of $y$'s within the same orbits, i.e. Ker$(\phi_n)$ = $(y_{i,k}-y_{i,j}$ | $1 \leq j < k \leq n, 1 \leq i \leq C)$. So $R_{\Gbar_n}$ is a quotient of $(\B^{{\otimes}(A+C)})_n$ by the first isomorphism theorem, making it a $(\B^{{\otimes}(A+C)})_n$-module by defining the action $r \cdot m = \phi_n(r)m$ for every $r \in (\B^{{\otimes}(A+C)})_n$ and $m \in R_{\Gbar_n}$.
\end{proof}

The next two results are based on the $\Gbar_\bullet$ defined above. 

\begin{lemma}\label{mainlemmaMnmodule}
Let $\M_n = R_{\Gbar_n}$ as defined above for each $n$, and for each injection $f: [k] \longrightarrow [l]$, define a $(\B^{{\otimes}(A+C)})_k$-module homomorphism $M_f: \M_k \longrightarrow \M_l$ such that for every $i,j$,
\[
M_f(x_{i,j}) = x_{i,f(j)}
\]
and
\[
M_f(y_{i}) = y_i
\]
Then $\M_\bullet$ is an $\FI$-module over $(\B^{{\otimes}(A+C)})_\bullet$. Furthermore, $\M_\bullet$ is finitely generated. 
\end{lemma}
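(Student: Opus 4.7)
The proof is a verification of the three conditions defining an $\FI$-module over an $\FI$-algebra together with the finite generation condition. My plan is to treat each of these in turn, reducing every identity to an assertion about variables and then invoking uniqueness of $\Q$-algebra homomorphisms out of the polynomial ring $\M_k = R_{\Gbar_k}$ to extend these identities to all of $\M_k$.

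First I will check that each $M_f$ is well-defined and that $\M_\bullet$ is a functor on $\FI$. Since $\M_k$ is the polynomial ring on the variables $\{x_{i,j}\} \cup \{y_i\}$, the prescription in the statement extends uniquely to a $\Q$-algebra homomorphism $M_f \colon \M_k \to \M_l$, and in particular each $M_f$ is $\Q$-linear. Functoriality $M_{f \circ g} = M_f \circ M_g$ and $M_{\id} = \id$ then follows by checking both sides on the variables, where it is immediate from $(f \circ g)(j) = f(g(j))$ and from the fact that two $\Q$-algebra maps agreeing on a generating set must coincide.

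Next I will verify the compatibility with the algebra action. The key step is to show that the square
\[
\begin{tikzcd}
\A_k \arrow[r, "A_f"] \arrow[d, "\phi_k"'] & \A_l \arrow[d, "\phi_l"] \\
\M_k \arrow[r, "M_f"'] & \M_l
\end{tikzcd}
\]
commutes, where $\phi_n$ is the surjection from Lemma \ref{lemmatensorBnmodule}. Since every map in the square is a $\Q$-algebra homomorphism, it suffices to check commutativity on the generating variables $x_{i,j}, y_{i,j}$ of $\A_k$; on $x_{i,j}$ both compositions give $x_{i,f(j)}$, and on $y_{i,j}$ both give $y_i$. Given this naturality, the required identity $M_f(a\cdot m) = A_f(a) \cdot M_f(m)$ follows from the definition of the $\A_k$-action as $a \cdot m = \phi_k(a)\, m$ together with the multiplicativity of $M_f$, via the chain $M_f(\phi_k(a)\, m) = M_f(\phi_k(a)) \, M_f(m) = \phi_l(A_f(a)) \, M_f(m) = A_f(a) \cdot M_f(m)$.

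Finally, for finite generation I will take the singleton set $H = \{1 \in \M_1\}$. Given any $m \in \M_n$ with $n \geq 1$, surjectivity of $\phi_n$ produces $\tilde m \in \A_n$ with $\phi_n(\tilde m) = m$; since the ring homomorphism $M_\iota$ sends $1$ to $1$ for any injection $\iota \colon [1] \hookrightarrow [n]$, we obtain the single-term expression $m = \tilde m \cdot M_\iota(1_{\M_1})$ of the required form, with $\tilde m \in \A_n$ playing the role of the algebra coefficient. The only step demanding real care is the commutativity of the square above, as everything else amounts to extending identities from variables to the whole polynomial ring; even there, the check reduces to the two elementary cases on $x_{i,j}$ and $y_{i,j}$, so the argument is ultimately a clean translation of the vertex combinatorics of $\Gbar_\bullet$ into the $\FI$-module formalism.
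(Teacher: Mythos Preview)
Your proof is correct. It differs in presentation from the paper's argument: the paper works through the quotient identification $\M_n \cong (\B^{\otimes(A+C)})_n / I_n$ established in Lemma~\ref{lemmatensorBnmodule}, defines $M_f$ as the map $\tilde X_f$ induced on quotients by the standard transition map $X_f$ of the $\FI$-algebra, and then must verify that this induced map is well-defined by checking $X_f(I_k) \subseteq I_l$ on generators of $I_k$. You instead exploit directly that $\M_k = R_{\Gbar_k}$ is itself a polynomial ring, so $M_f$ is specified uniquely by its values on the variables with no well-definedness issue, and the compatibility condition reduces to the commutative square $\phi_l \circ A_f = M_f \circ \phi_k$, checked on the generators $x_{i,j}$ and $y_{i,j}$. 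Your route is a bit more economical; the paper's quotient viewpoint has the advantage that the subsequent results (Theorem~\ref{thmFImoduleovertensorB} and Theorem~\ref{Krulldimthm}), which involve further quotients, read as direct continuations of the same setup. Both proofs reach finite generation via the element $1$ in degree~$1$.
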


\begin{proof}
We have shown in the previous lemma that each $R_{\Gbar_n}$ is equivalent to $(\B^{{\otimes}(A+C)})_n/I_n$ under the action by $(\B^{{\otimes}(A+C)})_n$ where $I_n$ = Ker$(\phi_n)$. Also, notice that each $M_f$ is equivalent to a transition map $\Tilde{X}_f$ between two quotient rings $(\B^{{\otimes}(A+C)})_k/I_k$ and $(\B^{{\otimes}(A+C)})_l/I_l$ such that for every $m \in (\B^{{\otimes}(A+C)})_k$,
\[
\Tilde{X}_f(m+I_k) = X_f(m)+I_l
\]
where $X_f$ represents the standard transition map that permutes variables within the same $\Sn_n$-orbit when treating $(\B^{{\otimes}(A+C)})_\bullet$ as an $\FI$-module over itself. Therefore, it suffices to show that $(\B^{{\otimes}(A+C)})_\bullet / I_\bullet$ is indeed an $\FI$-module over $(\B^{{\otimes}(A+C)})_\bullet$ with transition maps $\Tilde{X}_f$ defined above. 

We see that $\Tilde{X}_f$ is the same as $X_f$ except the fact that $\Tilde{X}_f$ are transition maps between quotient rings. Therefore, if the transition maps are well-defined, the second bullet point of the $\FI$-module $\M_\bullet$ in Definition \ref{FImoduleoverFIalgebradef} will be satisfied as follows:
\begin{align*} 
&\Tilde{X}_f(a \cdot (m+I_k)) \\
&= \Tilde{X}_f(am+I_k) \\
&= X_f(am)+I_l \\
&= (A_f(a) \cdot X_f(m))+I_l \\
&= A_f(a) \cdot (X_f(m)+I_l) \\
&= A_f(a) \cdot \Tilde{X}_f(m+I_k).
\end{align*}

Now it remains to show that $\Tilde{X}_f$ is a well-defined function for each injection $f$. 

We know that for each $n$, The ideal $I_n$ = Ker$(\phi_n)$ = $(y_{i,k}-y_{i,j}$ | $1 \leq j < k \leq n, 1 \leq i \leq C)$. It can be easily seen that when $k \leq l$, each generator of $I_k$ will be sent to a generator of $I_l$ by $X_f$ under any injection $f: [k] \longrightarrow [l]$. This means that $X_f(I_k) \subseteq I_l$ always holds, so for any $m_1 + I_k = m_2 + I_k$, $\Tilde{X}_f(m_1 + I_k) = \Tilde{X}_f(m_2 + I_k)$ always holds. So each $\Tilde{X}_f$ is indeed a well-defined $(\B^{{\otimes}(A+C)})_k$-module homomorphism. This shows that $(\B^{{\otimes}(A+C)})_\bullet / I_\bullet$ is an $\FI$-module over $(\B^{{\otimes}(A+C)})_\bullet$. 

As for finite generations, notice that $1+I_1 \in (\B^{{\otimes}(A+C)})_1 / I_1$ will be sent to every $(1+I_k) \in (\B^{{\otimes}(A+C)})_k / I_k$ under any injection from $[1]$ to $[k]$. So the element $1+I_1$ actually generates the entire $(\B^{{\otimes}(A+C)})_\bullet / I_\bullet$.
\end{proof}

The arguments above can be extended to the collection of quotient rings $(R/I)_{\Gbar_n} := R_{\Gbar_n}/I_{\Gbar_n}$.

\begin{theorem}\label{thmFImoduleovertensorB}
Let $\Mboldbar_\bullet$ be such that $\Mboldbar_n = (R/I)_{\Gbar_n}$ for every $n \in \N$, with the transition maps $\Mbar_f$ naturally extended from $M_f$ defined in the previous lemma. Then $\Mboldbar_\bullet$ is a finitely generated $\FI$-module over $(\B^{{\otimes}(A+C)})_\bullet$.
\end{theorem}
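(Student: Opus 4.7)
The plan is to build $\Mboldbar_\bullet$ by descending the $\FI$-module structure on $\M_\bullet$ (established in Lemma \ref{mainlemmaMnmodule}) to the quotient by the edge ideals. Since each $\Mboldbar_n = R_{\Gbar_n}/I_{\Gbar_n} = \M_n / I_{\Gbar_n}$, the module structure of $\Mboldbar_n$ over $(\B^{\otimes(A+C)})_n$ is automatic (one composes the action on $\M_n$ with the projection). The only nontrivial content is checking that the transition maps $M_f$ from the previous lemma descend to well-defined maps $\Mbar_f : \Mboldbar_k \to \Mboldbar_l$, and that the resulting $\FI$-module is still generated by a single element.

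The crux is to verify that $M_f(I_{\Gbar_k}) \subseteq I_{\Gbar_l}$ for every injection $f : [k] \to [l]$. A generating element of $I_{\Gbar_k}$ is a product $z_u z_v$ of two variables labeled by adjacent vertices $u,v \in V(\Gbar_k)$, where $z_u$ is either of the form $x_{i,j}$ (for a vertex in one of the $A$ orbits) or $y_i$ (for a singleton). Because $M_f$ acts by $x_{i,j} \mapsto x_{i,f(j)}$ and $y_i \mapsto y_i$, the image $M_f(z_u z_v)$ is precisely the product of the two variables labeled by $\Gbar_f(u)$ and $\Gbar_f(v)$. Since $\Gbar_f$ is a graph homomorphism, the pair $\{\Gbar_f(u), \Gbar_f(v)\}$ is again an edge of $\Gbar_l$, so $M_f(z_u z_v)$ is a generator of $I_{\Gbar_l}$. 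Thus $\Mbar_f$ is a well-defined $(\B^{\otimes(A+C)})_k$-module map, and the compatibility $\Mbar_f(a \cdot \bar m) = A_f(a) \cdot \Mbar_f(\bar m)$ descends immediately from the analogous identity for $M_f$ proven in the previous lemma. Functoriality in $f$ is also inherited from $\M_\bullet$.

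For finite generation, recall that the previous lemma showed $\M_\bullet$ is generated as an $\FI$-module over $(\B^{\otimes(A+C)})_\bullet$ by the single element $1 \in \M_1$, since $M_f(1) = 1$ for every injection $f$ and $\M_n$ is a cyclic $(\B^{\otimes(A+C)})_n$-module (in fact, a quotient of it). Composing with the surjection $\M_n \twoheadrightarrow \Mboldbar_n$, the image of $1$ in $\Mboldbar_1$ generates $\Mboldbar_\bullet$ in the sense of Definition \ref{FImoduleoverFIalgebradef}: any $\bar m \in \Mboldbar_n$ lifts to some $m \in \M_n$, which is a finite $(\B^{\otimes(A+C)})_n$-linear combination of images $M_{f_i}(1)$, and reducing modulo $I_{\Gbar_n}$ expresses $\bar m$ as the corresponding combination of $\Mbar_{f_i}(\bar 1)$.

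I do not anticipate any serious obstacle here; the main thing to get right is the edge-ideal containment $M_f(I_{\Gbar_k}) \subseteq I_{\Gbar_l}$, and that reduces in one line to the defining property of $\Gbar_f$ as a graph homomorphism, with the only mildly delicate points being that one must handle edges of mixed type (two orbit-vertices, two singletons, or one of each) uniformly, and that one must use $\Gbar_\bullet$ rather than the original $G_\bullet$ so that the vertex-set description of Theorem \ref{theoremforlinearFIset} is available at every $n$.
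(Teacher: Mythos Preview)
Your proposal is correct and follows essentially the same approach as the paper: you descend the $\FI$-module structure from $\M_\bullet$ to the quotient by verifying that $M_f(I_{\Gbar_k}) \subseteq I_{\Gbar_l}$ via the fact that $\Gbar_f$ is a graph homomorphism, and you deduce finite generation of $\Mboldbar_\bullet$ from that of $\M_\bullet$ by passing to the quotient. The paper's proof is terser but structurally identical.
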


\begin{proof}
Since each $\Mboldbar_n$ is a quotient of $R_{\Gbar_n}$ and each $R_{\Gbar_n}$ is a $(\B^{{\otimes}(A+C)})_n$-module by Lemma \ref{lemmatensorBnmodule}, $\Mboldbar_n$ is also a $(\B^{{\otimes}(A+C)})_n$-module. Also, the collection of $\M_n = R_{\Gbar_n}$ forms an $\FI$-module over $(\B^{{\otimes}(A+C)})_\bullet$ by lemma \ref{mainlemmaMnmodule}, so the natural extension of transition maps preserves the second property in Definition \ref{FImoduleoverFIalgebradef}. 
Notice that $\Gbar_\bullet$ is an $\FI$-graph, so edges are preserved under any transition maps. In particular, for every $k \leq l$, the generators of $I_{\Gbar_k}$ are contained in $I_{\Gbar_l}$ under any transition maps from $\Gbar_k$ to $\Gbar_l$. Using the same argument in the lemma above, we conclude that all transition maps are well-defined. So $\Mboldbar_\bullet$ is an $\FI$-module over $(\B^{{\otimes}(A+C)})_\bullet$. Also, since $\Mboldbar_\bullet$ is overall a quotient of $\M_\bullet$, it should also be finitely generated.
\end{proof}

In Theorem \ref{thmdimR/I}, we related the independence number of a graph and the Krull dimension of the quotient ring. In fact, they can be further related to the Krull dimension of the modules $\Mbar_n$. Recall that given a ring $R$, the Krull dimension of an $R$-module $M$ is the Krull dimension of the quotient ring $R / \Ann_R(M)$. Notice that for the same ring, the Krull dimension might be different when the ring is treated as a ring or as an $R$-module. However, since $(R/I)_{\Gbar_n}$ is "similar" enough to $(\B^{{\otimes}(A+C)})_n$ (the first is a quotient of a quotient of the second), we can have the following result:

\begin{theorem}\label{Krulldimthm}
Let $\Mboldbar_\bullet$ be the $\FI$-module over $(\B^{{\otimes}(A+C)})_\bullet$ where $\Mboldbar_n = (R/I)_{\Gbar_n}$ for every $n \in \N$, as defined in the above theorem. Then for each $n$, the Krull dimension of $\Mboldbar_n$ equals to the Krull dimension of the quotient ring $(R/I)_{\Gbar_n}$.
\end{theorem}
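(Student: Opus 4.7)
The plan is to unpack the $(\B^{\otimes(A+C)})_n$-module structure on $\Mboldbar_n$ and recognize $\Mboldbar_n$ as a cyclic quotient $R/J_n$ of the ring $R := (\B^{\otimes(A+C)})_n$, at which point the module-theoretic Krull dimension collapses to the ring-theoretic Krull dimension of that same quotient.

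First I will recall the surjection $\phi_n\colon R \twoheadrightarrow R_{\Gbar_n}$ constructed in Lemma \ref{lemmatensorBnmodule}, whose kernel $K_n = (y_{i,k} - y_{i,j})$ equips $R_{\Gbar_n}$, and hence its further quotient $\Mboldbar_n$, with its $R$-module structure via $r \cdot m := \phi_n(r)\, m$. Setting $J_n := \phi_n^{-1}(I_{\Gbar_n})$ and applying the correspondence theorem for ideals, I get a canonical ring isomorphism
\[
R/J_n \;\cong\; R_{\Gbar_n}/I_{\Gbar_n} \;=\; (R/I)_{\Gbar_n} \;=\; \Mboldbar_n,
\]
under which the $R$-module structure on $\Mboldbar_n$ supplied by Theorem \ref{thmFImoduleovertensorB} agrees with the tautological quotient-module structure on $R/J_n$.

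Next I will invoke the standard fact that for any commutative ring $R$ and any ideal $J \subseteq R$, the annihilator of the cyclic module $R/J$ is exactly $J$. Applied to our situation this yields $\Ann_R(\Mboldbar_n) = J_n$, and therefore, by the very definition of the Krull dimension of a module,
\[
\dim_R(\Mboldbar_n) \;=\; \dim\!\bigl(R/\Ann_R(\Mboldbar_n)\bigr) \;=\; \dim(R/J_n) \;=\; \dim\!\bigl((R/I)_{\Gbar_n}\bigr),
\]
which is exactly the claim.

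I do not expect any substantial obstacle here; the argument is essentially bookkeeping. The one point that deserves genuine care is the verification that the $R$-action on $\Mboldbar_n$ coming through $\phi_n$ actually coincides with the quotient module structure on $R/J_n$, so that the annihilator computation $\Ann_R(\Mboldbar_n) = J_n$ is unambiguous. This is transparent from how $\phi_n$ was built in Lemma \ref{lemmatensorBnmodule} and from the fact that $J_n$ is defined as the \emph{preimage} of $I_{\Gbar_n}$ (so that $R \to R/J_n$ factors through $\phi_n$), but it should be recorded explicitly in the write-up.
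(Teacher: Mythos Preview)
Your argument is correct and follows essentially the same route as the paper: both identify the annihilator of $\Mboldbar_n$ over $R=(\B^{\otimes(A+C)})_n$ as $\phi_n^{-1}(I_{\Gbar_n})$ and then exhibit the ring isomorphism $R/\Ann_R(\Mboldbar_n)\cong (R/I)_{\Gbar_n}$. The only difference is packaging: you invoke the correspondence theorem and the standard fact $\Ann_R(R/J)=J$, whereas the paper computes the annihilator and constructs the isomorphism $\psi_n$ explicitly by hand.
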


\begin{proof}
For simplicity, denote $(\B^{{\otimes}(A+C)})_n$ as $S_n$ for now. It suffices to find an isomorphism between $S_n / \Ann_{S_n}\bigl((R/I)_{\Gbar_n}\bigr)$ and $(R/I)_{\Gbar_n}$. Using the definition of annihilator, 
\[
\Ann_{S_n}\bigl((R/I)_{\Gbar_n}\bigr) = \{s \in S_n \text{ } | \text{ } s \cdot m = I_{\Gbar_n} \text{ } \forall m \in (R/I)_{\Gbar_n}\}.
\]
Recall from the proof of Lemma \ref{lemmatensorBnmodule} that there is a surjective homomorphism $\phi_n$ between $S_n$ and $R_{\Gbar_n}$, and the action is defined by $r \cdot m = \phi_n(r)m$. Since $(R/I)_{\Gbar_n}$ is a quotient of $R_{\Gbar_n}$, this action can be naturally extended. Using this definition, we can find 

\begin{align*} 
&\Ann_{S_n}\bigl((R/I)_{\Gbar_n}\bigr) \\
&= \{s \in S_n \text{ } | \text{ } s \cdot m = I_{\Gbar_n} \text{ } \forall m \in (R/I)_{\Gbar_n}\} \\
&= \{s \in S_n \text{ } | \text{ } (\phi_n(s)+I_{\Gbar_n})(r+I_{\Gbar_n}) = I_{\Gbar_n} \text{ } \forall r+I_{\Gbar_n} \in (R/I)_{\Gbar_n}\} \\
&= \{s \in S_n \text{ } | \text{ } (\phi_n(s)r+I_{\Gbar_n}) = I_{\Gbar_n} \text{ } \forall \text{ } r+I_{\Gbar_n} \in (R/I)_{\Gbar_n}\} \\
&= \{s \in S_n \text{ } | \text{ } \phi_n(s)r \in I_{\Gbar_n} \text{ } \forall \text{ } r \in R_{\Gbar_n}\} \\
&= \{s \in S_n \text{ } | \text{ } \phi_n(s) \in I_{\Gbar_n}\}
\end{align*}

where the last equality holds because $I_{\Gbar_n}$ is an ideal of $R_{\Gbar_n}$, and $R_{\Gbar_n}$ is a ring with $1$. 

Now denote $\Ann$ as $\Ann_{S_n}\bigl((R/I)_{\Gbar_n}\bigr)$ for simplicity, and define a function $\psi_n: S_n/{\Ann} \longrightarrow (R/I)_{\Gbar_n}$ such that 

\[
\psi_n(s+\Ann) = \phi_n(s) + I_{\Gbar_n}.
\]

First check that this map is well-defined: suppose $s_1+\Ann = s_2+\Ann$, so $s_1 - s_2 \in \Ann$, so $\phi_n(s_1) - \phi_n(s_2) = \phi_n(s_1 - s_2) \in I_{\Gbar_n}$ by definition, so $\psi_n(s_1+\Ann) = \phi_n(s_1) + I_{\Gbar_n} = \phi_n(s_2) + I_{\Gbar_n} = \psi_n(s_2+\Ann)$. The properties for a surjective ring homomorphism are easy to check for $\psi_n$ given that $\phi_n$ is a surjective homomorphism. Finally, $\psi_n$ is injective because the kernel is trivial due to the fact that $s \in \Ann \iff \phi_n(s) \in I_{\Gbar_n}$.
\end{proof}

Now combining all the pieces together, we can show the linearity of independence number in a vertex-linear $\FI$-graph, which has been stated in Theorem \ref{mainthm}: 

\begin{theorem}
Let $G_\bullet$ be a vertex-linear $\FI$-graph, and denote $\alpha(G_n)$ as the independence number of each $G_n$. Then the growth of $\alpha(G_n)$ is eventually linear in $n$, i.e. there exists (integer) constants A, B such that for all $n \gg 0$, 
\[
\alpha(G_n) = An + B.
\]
\end{theorem}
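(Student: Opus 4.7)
The plan is to simply assemble the machinery that has been developed in the section. All of the substantive work has already been done in the preceding lemmas and theorems; what remains is to chain them together and verify that the chain begins at $\alpha(G_n)$ and ends at a linear function of $n$.

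First I would observe that for the purposes of computing $\alpha(G_n)$ for $n \gg 0$, it suffices to work with the auxiliary FI-graph $\Gbar_\bullet$ introduced in Definition \ref{defGbar}, since by construction $\Gbar_n = G_n$ for every $n \geq d$, where $d$ is the vertex-linear stable degree of $G_\bullet$. In particular $\alpha(G_n) = \alpha(\Gbar_n)$ for all $n \geq d$, so it is enough to show that $\alpha(\Gbar_n)$ agrees with a linear polynomial for $n \gg 0$.

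Next I would translate the independence number into a Krull dimension and then into an FI-module Krull dimension. Applying Theorem \ref{thmdimR/I} gives $\alpha(\Gbar_n) = \dim\bigl((R/I)_{\Gbar_n}\bigr)$ for every $n$. Theorem \ref{Krulldimthm} then identifies this quotient-ring Krull dimension with the Krull dimension of $\Mboldbar_n$ viewed as a module over $(\B^{\otimes(A+C)})_n$. By Theorem \ref{thmFImoduleovertensorB}, $\Mboldbar_\bullet$ is a finitely generated FI-module over the FI-algebra $(\B^{\otimes(A+C)})_\bullet$. Hence Theorem \ref{linearkrulldimthm} applies directly and produces integer constants $A',B'$ with
\[
\dim \Mboldbar_n = A'n + B'
\]
for all $n \gg 0$. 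Combining the chain of equalities yields $\alpha(G_n) = A'n + B'$ for all sufficiently large $n$, which is the desired conclusion.

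There is essentially no serious obstacle left at this point, since all the heavy lifting has been packaged into the earlier statements. The only thing one must be careful about is the transition between $G_\bullet$ and $\Gbar_\bullet$: the modification that pads $G_n$ with empty graphs for $n < d$ is innocuous because the conclusion we are after is only asymptotic in $n$, and $\Gbar_n$ coincides with $G_n$ for all large $n$. One small bookkeeping point worth flagging is that the constants $A'$ and $B'$ extracted from Theorem \ref{linearkrulldimthm} need not be the same as the constants $A$ and $C$ describing the vertex count; the theorem only asserts that the eventual behavior is linear, and we should not over-claim any explicit formula beyond that.
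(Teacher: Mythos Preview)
Your proposal is correct and follows essentially the same approach as the paper: pass to $\Gbar_\bullet$, invoke Theorem~\ref{thmdimR/I} and Theorem~\ref{Krulldimthm} to identify $\alpha(\Gbar_n)$ with the Krull dimension of $\Mboldbar_n$, then apply Theorem~\ref{linearkrulldimthm} via the finite generation established in Theorem~\ref{thmFImoduleovertensorB}. The paper's proof is the same chain of citations, with the only cosmetic difference being that it explicitly names the two thresholds $d$ and $u$ and takes $c = \max\{d,u\}$ at the end.
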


\begin{proof}
Find the integer $d$ such that $G_\bullet$ vertex-linear stable degree $\leq d$. Let $\Gbar_\bullet$ be the new $\FI$-Graph defined in Definition 
\ref{defGbar}, and we can see that $\alpha(G_n) = \alpha(\Gbar_n)$ for every $n \geq d$. Theorem \ref{thmdimR/I} tells us that for each $n$, $\alpha(\Gbar_n) = \dim(R/I)_{\Gbar_n}$, and Theorem \ref{Krulldimthm} further tells us that $\dim(R/I)_{\Gbar_n} = \dim \Mboldbar_n$ where $\Mboldbar_\bullet$ is the finitely generated $\FI$-module over $(\B^{{\otimes}(A+C)})_\bullet$ constructed in Theorem \ref{thmFImoduleovertensorB}. Theorem \ref{linearkrulldimthm} tells us that there are constants $A$, $B$, and $u$ such that for every $n \geq u$, $\dim \Mboldbar_n = An + B$. This implies that $\alpha(\Gbar_n) = An + B$ for every $n \geq u$. So by taking $c = \max\{d, u\}$, we can conclude that for every $n \geq c$, $\alpha(G_n) = An+B$. 
\end{proof}

\section{The general case of Conjecture \ref{mainconj}} \label{generalcase}

\subsection{Some words on how our proof may generalize}

In the previous section, we proved linear growth for independence numbers of vertex linear $\FI$-graphs using entirely algebraic techniques. More specifically, we constructed an $\FI$-module over an $\FI$-algebra $(\B^{{\otimes}d})_\bullet$ whose number of variables grows linearly, and argued how the growth of the independence number relates with the grow of the Krull dimension. One may ask a question about whether a similar method can be used to explore $\alpha(G_n)$ for an $\FI$-graph whose vertex set $V(G_n)$ grows like a higher-degree polynomial, i.e. constructing an $\FI$-module over an $\FI$-algebra whose number of variables grows in the same degree. Unfortunately, the answer is not as clear; the proof of Theorem \ref{linearkrulldimthm} in the original paper relies on the fact that every finitely generated $\FI$-module $\M_\bullet$ over $(\B^{{\otimes}d})_\bullet$ is Noetherian, i.e. every $\FI$-submodule of $\M_\bullet$ is also finitely generated. However, this is not necessarily the case for $\FI$-modules over $\FI$-algebras whose number of variables grow faster than linear, as the following well-known ``folk-lore" example shows.

\begin{example}
Let $\A_\bullet$ be the $\FI$-algebra such that each $\A_n$ is a polynomial ring with variables labelled with pairs, i.e. $\A_n$ = $\Q[x_{i,j} \text{ } | \text{ } i, j \in [n], i \neq j]$ for each $n$. Notice that this $\FI$-algebra has quadratic growth in its number of variables. Then $A_\bullet$ can be considered as a finitely generated $\FI$-module over itself.

Now for each $n$, consider the ideal $I_n$ generated by all monomials that encode the edges of a cycle in the complete graph $K_n$. Some examples of $I_n$ for small $n$'s include:
\[
I_1 = (0); 
\]

\[
I_2 = ({x_{1,2}}^2);
\]

\[
I_3 = (x_{1,2}x_{2,3}x_{3,1}, {x_{1,2}}^2, {x_{1,3}}^2, {x_{2,3}}^2);
\]
\begin{align*}
I_4 = (&x_{1,2}x_{2,3}x_{3,4}x_{4,1}, x_{1,2}x_{2,4}x_{4,3}x_{3,1}, x_{1,3}x_{3,2}x_{2,4}x_{4,1}, x_{1,2}x_{2,3}x_{3,1}, x_{1,2}x_{2,4}x_{4,1}, \\
&x_{1,3}x_{3,4}x_{4,1}, x_{2,3}x_{3,4}x_{4,2}, {x_{1,2}}^2, {x_{1,3}}^2, {x_{1,4}}^2, {x_{2,3}}^2, {x_{2,4}}^2, {x_{3,4}}^2).
\end{align*}

For every $n \geq m$, we can see that $I_n$ contains the image of $I_m$ under every transition map (because the image of any element that is a multiple of a $k$-cycle will again be a multiple of a $k$-cycle), so $I_\bullet$ is indeed an $\FI$-submodule of $A_\bullet$. However, every $n$-cycle in $I_n$ cannot be generated by elements from the previous $I_m$, because an $n$-cycle does not contain an $m$-cycle for every $m < n$. So $I_\bullet$ is not finitely generated. This means that $A_\bullet$ is not Noetherian.
\end{example}

The above example indicates that we cannot use the "Noetherianity" property to obtain nice results for $\FI$-graphs whose number of vertices grows in degree higher than $1$. Furthermore, the independence number of an $\FI$-graph whose number of vertices grows in degree $k \geq 1$ need not grow like a polynomial in $n$, as discussed in the example of Johnson Graph right before Theorem \ref{mainthm}. However, it is worth noting that our conjecture \ref{mainconj} is true in this example. Moreover, our method to calculate the independence number relies on finding the Krull dimension of rings quotiented by a finitely generated square-free monomial ideal, which are the easiest type of ideals to deal with.

We will introduce our experimentation of verifying this conjecture in lower-degree cases later in this chapter. 

\subsection{The classification theorem for vertex-quadratic $\FI$-graphs}\label{quadclass}

Right before definition \ref{defVertexLinearFIGraph}, we mentioned that if $G_\bullet$ is an $\FI$-graph, then the size of $|V(G_n)|$ agrees with a polynomial with some degree $n$. In fact, this fact can be referred from theorem \ref{thmofsubgraph} by making $H = $ \{a single vertex\}. The structure of the $\FI$-graph gets more complicated for larger $n$, but there is still a manageable (but not necessarily neat) way to classify all $\FI$-graphs if $|V(G_n)|$ does not grow more than quadratically, i.e. $n \leq 2$. The idea of the proof follows the same guideline as the proof of classifying all edge-linear $\FI$-graphs in \cite[Theorem 3.13]{R}. 

Now we will define a classification graph for the vertex-quadratic $\FI$-graphs, which indeed allows loops and multi-edges. Notice that this definition is a bit cumbersome, but it indeed captures the eventual behaviour of every possible vertex-quadratic $\FI$-graph. The reader only needs to quickly skim the notation we define here; we will explain how we interpret these labels in the next definition.

\begin{definition} \label{classificationgraphdef}
A vertex-quadratic classification graph, denoted $\mathfrak{C}$, is a (finite) graph with each vertex labelled either $v^{(1)}, v^{(2)}, v^{(3)}$, or $v^{(4)}$, satisfying the followings:

\begin{enumerate}

    \item Every loop around $v^{(1)}$ will have one of the following five labels: $e_1^{(1)}, e_2^{(1)}, e_3^{(1)}, e_4^{(1)}, e_5^{(1)}$; \label{cond1}
    \item Every loop around $v^{(2)}$ will have one of the following two labels: $e_1^{(2)}, e_2^{(2)}$\label{cond2};
    \item At most one loop is allowed for each $v^{(3)}$, in which case this loop will be labelled $e_1^{(3)}$;
    \item No loop is allowed for each $v^{(4)}$;
    \item In the cases \ref{cond1} and \ref{cond2}, no two loops around the same vertex should have the same label; 
    \item Every edge between two $v^{(1)}$'s will have one of the following six labels: $e_1^{(1,1)}, e_2^{(1,1)}, e_3^{(1,1)}, e_4^{(1,1)}, $ $ e_5^{(1,1)}, e_6^{(1,1)}$, respectively. Among them, an edge labeled $e_2^{(1,1)}$ also carries the data of a source and target for the edge; \label{cond6}
    \item Every edge between one $v^{(1)}$ and one $v^{(2)}$ will have one of the following four labels: $e_1^{(1,2)}, e_2^{(1,2)}, \\ e_3^{(1,2)}, e_4^{(1,2)}$;
    \item Every edge between one $v^{(1)}$ and one $v^{(3)}$ will have one of the following three labels: $e_1^{(1,3)}, e_2^{(1,3)}, \\ e_3^{(1,3)}$;
    \item Every edge between two $v^{(2)}$'s will have one of the following three labels: $e_1^{(2,2)}, e_2^{(2,2)}, e_3^{(2,2)}$;
    \item Every edge between one $v^{(2)}$ and one $v^{(3)}$ will have one of the following two labels: $e_1^{(2,3)}, e_2^{(2,3)}$;
    \item At most one edge is allowed between two $v^{(3)}$'s, in which case this edge will be labelled $e_1^{(3,3)}$;
    \item Any edge connected to a vertex of type $v^{(4)}$ will carry one of the labels $e_1^{(1,4)}$, $e_1^{(2,4)}$, $e_1^{(3,4)}$, or $e_1^{(4,4)}$ depending on the vertex type on the other end;\label{cond12}
    \item In the cases \ref{cond6} - \ref{cond12}, no two edges between the same two vertices should have the same label, with the only exception that at most two edges labelled $e_2^{(1,1)}$ are allowed if they point in opposite directions. 
\end{enumerate}
\end{definition}

For the next definition, we will expand upon the meaning of each notation in the above definition. In order to prevent our exposition from becoming too long and clumsy, we only provide a full description for the vertices $v^{(x)}$ and the edges $e_j^{(x)}$ and $e_j^{(x,y)}$ where $x \geq 2$. It is our hope that the reader can fill in any missing details based on similar ideas to what we now consider.

\begin{definition}\label{defnotationofclassificationgraph}
Given a vertex quadratic classification graph as defined immediately above, with no vertices of type $v^{(1)}$, and an integer $n \geq 0$, we construct the \textbf{description map} $\mathfrak{f}(\mathfrak{C}, n)$ to be the graph obtained from $\mathfrak{C}$ and $n$ by interpreting each vertex and edge label of $\mathfrak{C}$, as follows: 

\begin{itemize}
    \item $v^{(2)}$ represents an \textbf{unordered-pair orbit} $\mathcal{O}_{\{a,b\}}$, which is full collection of vertices labelled by unordered pairs $\{\{a, b\}$ $|$ $a, b \in [n], a \neq b\}$ acted by $\Sn_n$;
    \item $v^{(3)}$ represents a \textbf{linear-growth orbit} $\mathcal{O}_{a}$, which is a full collection of vertices labelled by natural numbers $\{c$ $|$ $c \in [n]\}$ acted by $\Sn_n$;
    \item $v^{(4)}$ represents a \textbf{singleton orbit} $\mathcal{O}_{0}$, which is a single vertex invariant under $\Sn_n$-action;
    \item $e_1^{(2)}$ indicates that every two vertices in the corresponding $\mathcal{O}_{\{a,b\}}$ whose labels are disjoint, i.e. $\{m, n\}$ and $\{k, l\}$, are connected;
    \item $e_2^{(2)}$ indicates that every two vertices in the corresponding $\mathcal{O}_{\{a,b\}}$ whose labels share one common number, i.e. $\{m, n\}$ and $\{m, k\}$, are connected;
    \item $e_1^{(3)}$ indicates that every two vertices in the corresponding $\mathcal{O}_{a}$ are connected, i.e. such $\mathcal{O}_{a}$ forms a complete graph itself;
    \item For each $j$ between $1$ and $3$, $e_j^{(2,2)}$ indicates that every two vertices, one from each corresponding $\mathcal{O}_{\{a,b\}}$'s, are connected if their labels share $j-1$ common numbers, i.e. $\{m, n\}$ and $\{k, l\}$ for $e_1^{(2,2)}$, $\{m, n\}$ and $\{m, k\}$ for $e_2^{(2,2)}$, and $\{m, n\}$ and $\{m, n\}$ for $e_3^{(2,2)}$;
    \item For $j = 1$ or $2$, $e_j^{(2,3)}$ indicates that every two vertices, one from the corresponding $\mathcal{O}_{\{a,b\}}$ and the other from the corresponding $\mathcal{O}_{a}$, are connected if their labels share $j-1$ common numbers;
    \item For $j = 1$ or $2$, $e_j^{(3,3)}$ indicates that every two vertices, one from each corresponding $\mathcal{O}_{a}$'s, are connected if their labels share $j-1$ common numbers;
    \item $e_1^{(x,4)}$ indicates that the singleton vertex from the corresponding $\mathcal{O}_{0}$ is connected to every vertex in the other orbit (the other orbit can also be a singleton, in which case $x=4$).
    
\end{itemize}

\end{definition}

It can be seen that we have completely avoided classification graphs with vertices labeled $v^{(1)}$ in the above definition. As one might easily guess, the classification map should take these vertices to collections of vertices labeled by \emph{ordered} pairs in $[n]$. In other words, the above only fully describes how to classify vertex quadratic $\FI$-graphs whose quadratic term exclusively comes from orbits of \emph{unordered} pairs (which will be called \textbf{unordered-pair $\FI$-graphs} in later sections). The expositional difficulty that arises when one allows for ordered-pair orbits is that the number of possible edges is amplified by the fact that one needs to not only consider how much overlap the pair has but also where the overlap occurs in the pair. We believe that this makes the description significantly more complicated without providing further insights. All of the computer calculations described below, and indeed the code \cite{indcode} itself, only works with the unordered pair case. The reader can will be able to fill in the missing details above by imitating the ideas we will use in the proof of Theorem \ref{thmforclassificationgraph} 

Now, let $G = \mathfrak{f}(\mathfrak{C}, n)$ for fixed $\mathfrak{C}$ and $n$, and consider a relation $\sim_V$ on the set $V(G)$ such that 
\[
v_1 \sim_V v_2 \iff v_2 \in \mathcal{O}_{\Sn_n}(v_1).
\]

We can also analogously define the relation $\sim_E$ on $E(G)$. It is easy to check that $\sim_V$ and $\sim_E$ are both equivalence relations, so we can simplify by combining them into the equivalence relation $\sim$ on $V(G) \cup E(G)$. Now. if we consider the set of equivalence classes of $G$, namely $G / {\sim}$, we can see that it is exactly isomorphic to $\mathfrak{C}$. In fact, the graph $G$ is an extended version of $\mathfrak{C}$ but is naturally endowed with $\Sn_n$ action.

\begin{theorem} \label{thmforclassificationgraph}
Let $G_\bullet$ be a vertex-quadratic $\FI$-graph. Then there is a (unique) classification graph $\mathfrak{C}$ such that for any $n \gg 0$, $\mathfrak{f}(\mathfrak{C}, n) \cong G_n$. 
\end{theorem}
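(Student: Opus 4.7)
The plan is to deduce the structure of the classification graph directly from the $\Sn_n$-orbit structure of $V(G_n)$ and $E(G_n)$ for $n \gg 0$, mimicking the blueprint of \cite[Theorem 3.13]{R}. First I would classify the vertex orbits: since $G_\bullet$ is an FI-graph, $V(G_\bullet)$ is an FI-set whose cardinality grows at most quadratically, and by the standard classification of such FI-sets (the quadratic extension of Theorem \ref{theoremforlinearFIset}), for all sufficiently large $n$ the set $V(G_n)$ decomposes canonically into a finite disjoint union of $\Sn_n$-orbits of exactly four types: singleton orbits $\mathcal{O}_0$, linear-growth orbits $\mathcal{O}_a$ isomorphic to $[n]$, unordered-pair orbits $\mathcal{O}_{\{a,b\}}$, and ordered-pair orbits (which the paper only treats implicitly and corresponds to $v^{(1)}$). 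The number and types of these orbits are independent of $n$ for $n$ large, so this step already produces the vertex set of $\mathfrak{C}$, labelled according to orbit type.

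Next I would analyze edges. An edge orbit of $\Sn_n$ on $E(G_n)$ lies between a fixed (unordered or ordered) pair of vertex orbits $\mathcal{O}_X, \mathcal{O}_Y$. The key observation is that the $\Sn_n$-orbits of pairs $(x,y) \in \mathcal{O}_X \times \mathcal{O}_Y$ (or of unordered pairs when $X=Y$) are classified by the combinatorial \emph{intersection pattern} of their labels: how many indices in $[n]$ are shared, and in which positions. For instance, when both orbits are of type $\mathcal{O}_{\{a,b\}}$, the intersection of the two underlying two-element subsets can have size $0$, $1$, or $2$, which is precisely the trichotomy $e_1^{(2,2)}, e_2^{(2,2)}, e_3^{(2,2)}$; loops on a single unordered-pair orbit drop the coincident case, leaving $e_1^{(2)}, e_2^{(2)}$; and similar finite enumerations handle all the remaining cases \ref{cond6}--\ref{cond12} in Definition \ref{classificationgraphdef}. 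The (slightly longer) ordered-pair cases are obtained from the same stabilizer calculation by additionally recording the position of any overlapping index, which explains both the larger number of labels for $v^{(1)}$-incident edges and the source/target data attached to $e_2^{(1,1)}$.

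To finish, I would show that for $n$ sufficiently large the edge set $E(G_n)$ is the union of a well-defined subset of these possible edge orbits, and that this subset is constant in $n$. This is where the finite-generation hypothesis does the real work: the FI-graph structure forces the edge orbits that appear at stage $n$ to be precisely those in the FI-orbit of edges appearing at stage $n_0$ for some fixed threshold, so passing to $n \gg 0$ stabilizes the list. Define $\mathfrak{C}$ by taking one vertex per vertex orbit (labelled by its type) and one edge per stable edge orbit (labelled by the corresponding $e_j^{(\cdot)}$ or $e_j^{(\cdot,\cdot)}$). Unpacking $\mathfrak{f}(\mathfrak{C}, n)$ via Definition \ref{defnotationofclassificationgraph} then recovers $G_n$ tautologically, and uniqueness follows because the orbit decomposition and the intersection-type labelling are canonically determined by $G_n$.

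The main obstacle is the bookkeeping for the ordered-pair case: verifying that the list of labels in Definition \ref{classificationgraphdef} truly exhausts the $\Sn_n$-orbits of pairs of ordered pairs, including the subtle point that two edges of label $e_2^{(1,1)}$ in opposite directions may simultaneously occur but two with the same direction cannot. This reduces to a stabilizer computation inside $\Sn_n \times \Sn_n$ acting on $([n]^2 \setminus \Delta) \times ([n]^2 \setminus \Delta)$, which is elementary but combinatorially dense; the paper sidesteps writing it out explicitly, and I would do the same, referring the reader to the analogous enumeration in \cite{R}.
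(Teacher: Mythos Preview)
Your proposal is correct and follows essentially the same route as the paper: apply \cite[Theorem A]{RSW} to the vertex set to get the four orbit types, enumerate the possible $\Sn_n$-orbits of edges between each pair of vertex-orbit types via their intersection pattern, and then invoke stabilization (the paper does this by a second application of \cite[Theorem A]{RSW} to $E(G_\bullet)$, which is exactly your finite-generation step) to pin down $\mathfrak{C}$. One small slip: the relevant action for the ordered-pair edge classification is the diagonal $\Sn_n$-action on $([n]^2\setminus\Delta)\times([n]^2\setminus\Delta)$, not $\Sn_n\times\Sn_n$; otherwise your outline matches the paper's proof.
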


\begin{proof}
According to the classification theorem \cite[Theorem A]{RSW}, for every sufficiently large $n$, $V(G_n)$ is isomorphic to a disjoint union of some sets endowed with $\Sn_n$ action, i.e. $V(G_n) \cong \bigsqcup_i [X_i]_n$, and the growth of each $X_i$ agrees with a polynomial of $n$. Also, the theorem implies that the size of $X_i$:

\begin{itemize}
    \item grows quadratically if and only if it is isomorphic as an $\Sn_n$-set to the set of ordered or unordered pairs on $[n]$;
    \item grows linearly if and only if it is isomorphic as an $\Sn_n$-set to $[n]$;
    \item is a constant if and only if it is a single element, invariant under the $\Sn_n$ action.
\end{itemize}

In particular, for all $n \gg 0$,  the vertex set of $G_n$ is isomorphic as an $\Sn_n$-set to a disjoint union of copies of ordered-pair orbits, unordered-pair orbits, linear-growth orbits and singleton orbits (as defined in definition \ref{defnotationofclassificationgraph}).

To understand the full structure of $G_n$, with $n \gg 0$, it remains to identify the edges. By definition of an $\FI$-graph, we know that the edge set $E(G_n)$ is an $\Sn_n$-set in a way that extends the action on the vertex set. The question therefore becomes to classify the possible symmetric $\Sn_n$-relations between the four types of $\Sn$-sets that comprise the vertex set. It is easily checked that the possible such relations, excluding the cases of ordered pairs, are precisely those described in Definition \ref{defnotationofclassificationgraph}. The case the the possible $\Sn_n$-relations where one of the sets is a set of ordered pairs is left to the reader.

To complete the proof, we note that a second application of \cite[Theorem A]{RSW} to the edge set, instead of the vertex set, implies that the edge set must decompose as an $\Sn_n$-set in a consistent way when $n \gg 0$. When both the edge and vertex sets have stabilized in this sense, it must be the case that the graph $G_n$ is isomorphic to $\mathfrak{f}(\mathfrak{C},n)$.
\end{proof}

\begin{remark}
Although not being directly implemented in our later work, it is worth mentioning why we make $e_2^{(1, 1)}$ to be directed edges in definition \ref{classificationgraphdef} if the reader wants to further fill the rest of the details for definition \ref{defnotationofclassificationgraph}. We add an edge labelled $e_2^{(1, 1)}$ between two vertices labelled $v^{(1)}$ to indicate that a vertex $v_1$ in the "first" $\mathcal{O}_{(a,b)}$ is connected to a vertex $v_2$ in the "second" $\mathcal{O}_{(a,b)}$ if their labels share \emph{exactly one} common number $m$, that appears in the first index of $v_1$'s label and the second of $v_2$'s, i.e. $(m, n)$ and $(k, m)$, with $n \neq k$. In this case, we have to use arrows to indicate which $\mathcal{O}_{(a,b)}$ is the "first" and which is the "second" due to the antisymmetry of this case. In fact, this is the only case we need to consider which orbit comes "first" in a vertex-quadratic $\FI$-graph; all the remaining cases between two same-type orbits are symmetric, and cases between two different-type orbits will not cause confusions.
\end{remark}

A few examples of classification graphs and the corresponding $\FI$-graphs are provided here for the reader to get more familiar with our descriptions above. 

\begin{example}\label{classificationgraphexample}
Let $G_n = KG_{n,2}$ for each $n$ be a collection of Kneser graphs. Then $G_n \cong \mathfrak{f}(\mathfrak{C}, n)$ for every $n \geq 2$, where the classification graph $\mathfrak{C}$ should be a graph with only one vertex $v^{(2)}$ with a loop $e_1^{(2)}$ around it. Similarly, if $G_n = J_{n,2}$ is a collection of Johnson graphs, then $\mathfrak{C}$ should be a graph with only one vertex $v^{(2)}$ with a loop $e_2^{(2)}$ around it.
\end{example}

\begin{example}
Let $G_n = K_{n,n}$ for each $n$ be a collection of complete bipartite graphs. Then $G_n \cong \mathfrak{f}(\mathfrak{C}, n)$ for every $n \geq 1$, where $\mathfrak{C}$ should contain two $v^{(3)}$ vertices, and they are connected by two edges, $e_1^{(3,3)}$ and $e_2^{(3,3)}$. This is because connecting all vertices between two copies of $[n]$ is the same as connecting vertices between two copies of $[n]$ if their labels share zero or one common number. If we wanted to alter our example to turn one of the vertex partitions into a complete graph, then there is another loop $e_1^{(3)}$ around the corresponding $v^{(3)}$. 
\end{example}

\begin{example}
Let $\mathfrak{C}$ be a graph satisfying the following: 
\begin{itemize}
    \item it has three vertices, two of them are $v^{(2)}$'s and the remaining one is $v^{(4)}$;
    \item each $v^{(2)}$ has two loops $e_1^{(2)}$ and $e_2^{(2)}$; and
    \item the two $v^{(2)}$'s are connected by $e_1^{(2,2)}$, $e_2^{(2,2)}$ and $e_3^{(2,2)}$, and each $v^{(2)}$ is connected to the $v^{(4)}$ by $e_1^{(2,4)}$. 
\end{itemize}
Then it is not hard to see that every two vertices in $G_n = \mathfrak{f}(\mathfrak{C},n)$ are adjacent for each $n \geq 2$; in fact, each $G_n$ forms a complete graph $K_{\binom{n}{2}+1}$. Importantly, however, the $\Sn_n$ action on this complete graph is not transitive. In fact, it has a vertex that is invariant under the action of $\Sn_n$, as well as two orbits, each of which isomorphic to unordered pairs on $[n]$.

\end{example}

\subsection{Sage experimentation}

In order to support our conjecture \ref{mainconj}, we created a Sagemath program that can compute the independence number $\alpha(G_n)$ for each not-too-large $n$ of any unordered-pair $\FI$-graph generated from a classification graph (see the description right after the definition \ref{defnotationofclassificationgraph}.) The underlying idea is that users can manually input a classification graph, and the program outputs the corresponding $\FI$-graph and displays $G_n$ and its independence number for each sufficiently small $n$. Those independence numbers $\alpha(G_n)$ are displayed together so that users can easily see that there is a quasi-polynomial pattern in this sequence of numbers. This program can also generate vertex-linear $\FI$-graphs by not including any unordered-pair orbits, and the user can verify that all such $\FI$-graphs satisfy Theorem \ref{mainthm}. There is also a method to mass-generate random unordered-pair $\FI$-graphs and compute the corresponding independence numbers. 

Due to the data structure of the "graph" object in Sagemath, the classification graphs we defined in definition \ref{defnotationofclassificationgraph} cannot be directly inputted. The user guide gives instructions about how to manually convert the classification graph into a form that is usable in the code. Further details, as well as the user guide, are included in the program itself \cite{indcode}. 

\begin{remark}
Since our program generates the entire $\FI$-graph from the input classification graph, this means that every $G_n$ where $n \geq 2$ is an extended version of a classification graph, $\mathfrak{f}(\mathfrak{C}, n)$. In particular, our program does not have a mechanism for generating examples that have odd behaviors for small $n$. However, since the $G_n$'s of every $\FI$-graph \emph{eventually} look like an extended version of a fixed classification graph according to theorem \ref{thmforclassificationgraph}, the patterns observed from the $\FI$-graph we generate in the program can indeed generalize to every possible unordered-pair $\FI$-graph. Allowing outliers for small $n$'s does not add any insights to capturing the patterns of independence numbers, since we are ultimately interested in understanding things asymptotically in $n$. 
\end{remark}

\subsection{Some trends apparent from our experimentation}

By repeatedly running the random $\FI$-graph generator in our Sagemath program, we observe the following patterns for any $G_\bullet$ that is vertex-quadratic where the second degree entirely comes from unordered-pair orbits.

\begin{enumerate}
    \item For every $n \gg 0$, the independence number $\alpha(G_n)$ agrees with an (at most quadratic-degree) quasi-polynomial of \textbf{at most two pieces}; in other words, 
    \[
    \alpha(G_n) = \left\{
    \begin{array}{ll}
          f_1(n), & n \text{ is odd} \\
          f_2(n), & n \text{ is even}
    \end{array} 
    \right.
    \]
    where $f_1$, $f_2$ are polynomials of degree at most two. This observation shows that conjecture \ref{mainconj} is likely to be true beyond vertex-linearity. \label{condition1trend}
    \item If the classification graph $\mathfrak{C}$ corresponding to $G_\bullet$ \textbf{does not} contain an orbit of the Johnson graph (meaning that a vertex $v^{(2)}$ with exactly a loop $e_2^{(2)}$ around is \textbf{not} an induced subgraph of $\mathfrak{C}$; see Example \ref{classificationgraphexample}), then $\alpha(G_n)$ indeed agrees with a polynomial $f$ for sufficiently large $n$ (or in other words, $f_1 = f_2$). \label{condition2trend}
    \item The (quasi)-polynomial in \ref{condition1trend} and \ref{condition2trend} has degree two if and only if it contains an unordered-pair orbit without edges, i.e. a classification graph containing a vertex $v^{(2)}$ without any loop. \label{condition3trend}
\end{enumerate}

\begin{remark}
If we assume conjecture \ref{mainconj} to be true, then we can talk about the \textbf{period} of the quasi-polynomial $\alpha(G_n)$, or equivalently, the smallest number of subsets such that 
\[
\{\alpha(G_n) \text{ } | \text{ } n \gg 0\} = \bigsqcup X_i
\]
where the collection of elements in each $X_i$ agrees with a unique polynomial in $n$. For any $G_\bullet$ from our experimentation, the period of $\alpha(G_n)$ will be either $1$ or $2$ according to observation \ref{condition1trend} above. One possible direction of future studies of $\FI$-graphs along with proving conjecture \ref{mainconj} is to understand the relationship between the polynomial-degree of $|V(G_n)|$ and the period of $\alpha(G_n)$. Based on the current patterns we found, it is plausible to conjecture that for any $\FI$-graph $G_\bullet$, 
\[
T \text{ } | \text{ } d!
\]
where $T$ is the period of $\alpha(G_n)$ and $d$ is the degree of $|V(G_n)|$.

\end{remark}

\begin{remark}
The forward direction of observation \ref{condition3trend} is not true in a general vertex-quadratic $\FI$-graph where ordered-pair orbits are allowed. Let $\mathfrak{C}$ be the classification graph with a single vertex $v^{(1)}$ and a loop to indicate that in any $\mathfrak{f}(\mathfrak{C}, n)$ (recall this notation from definition \ref{defnotationofclassificationgraph}), any vertex labelled $(m,l)$ is connected to the one labelled $(l,k)$ as long as $m \neq l \neq k$. This $\FI$-graph turns out to be eventually connected, while for any $n \geq 2$, we can find an independent set $\bigl\{(a,b)$ | $1 \leq a \leq \lfloor \frac{n}{2} \rfloor, \lfloor \frac{n}{2} \rfloor + 1 \leq b \leq 2 \cdot \lfloor \frac{n}{2} \rfloor\bigr\}$ whose size is $\lfloor \frac{n}{2} \rfloor^2$, concluding that the independence number grows quadratically. 
\end{remark}

\begin{remark}
The converse statement of observation \ref{condition2trend} is not necessarily true. Let $G_n = \mathfrak{f}(\mathfrak{C}, n)$ for each $n$, where the classification graph $\mathfrak{C}$ is a graph satisfying the following:
\begin{itemize}
    \item it has three vertices, two of them are $v^{(2)}$ and the remaining one is $v^{(3)}$;
    \item each $v^{(2)}$ has a loop $e_2^{(2)}$, and the $v^{(3)}$ has a loop $e_1^{(3)}$; and
    \item one $v^{(2)}$ is connected to $v^{(3)}$ by $e_2^{(2,3)}$.
\end{itemize}
It is not hard to see that each $G_n$ is a disjoint union of $J_{n,2}$ and $J_{n+1,2}$ (in fact, $J_{n+1,2}$ belongs to the collection of Johnson graphs with $\FI$-shift applied once \cite[Definition 2.7]{CEFN}), making the independence number $\alpha(G_n) = \lfloor \frac{n}{2} \rfloor + \lfloor \frac{n+1}{2} \rfloor = n$ for every $n \geq 2$, which is exactly a polynomial. 
\end{remark}

Due to the high complexity of computing independence number of a graph as well as some other limitations, our Sagemath program can only handle the calculations up to approximately $G_{15}$ based on classification graphs that are simple enough. It is a natural question to ask whether there are $\FI$-graphs whose $\alpha(G_n)$ eventually grow like (quasi)-polynomials but the pattern does not emerge until very large $n$, so the pattern cannot be detected by the code. This inspires the following definition that is analogous to the definition of "vertex-linear stable degree". 

\begin{definition}
Let $G_\bullet$ be any $\FI$-graph. We say $G_\bullet$ has \textbf{independence-number stable degree} $\leq d$ if these exists a minimal integer $d$ such that $\alpha(G_n)$ agree with a (quasi)-polynomial for every $n \geq d$. 
\end{definition}

There is not a clear pattern of independence-number stable degree among different examples of $\FI$-graphs. However, it turns out that for every integer $m>0$, we can come up with examples of $\FI$-graph $G_\bullet$ where $|V(G_n)|$ eventually agrees with a polynomial of degree $m$, and $G_\bullet$ has arbitrarily large independence-number stable degree. For the following examples, we take $G_n = \mathfrak{f}(\mathfrak{C}, n)$ based on a classification graph $\mathfrak{C}$, and $k \gg 0$ is a large positive integer.

\begin{example}
Let $\mathfrak{C}$ be a graph satisfying the following:
\begin{itemize}
    \item it has $k+1$ vertices, $k$ of them are $v^{(4)}$ and the remaining one is $v^{(3)}$;
    \item each $v^{(4)}$ is connected to $v^{(3)}$ by an edge $e_1^{(3,4)}$.
\end{itemize}
It can be easily deduced that 
\[
\alpha(G_n) = \left\{
\begin{array}{ll}
    k, & n < k \\
    n, & n \geq k
    \end{array} 
\right.
\]
which means that $G_\bullet$ has independence-number stable degree $\leq k$.
\end{example}

We can come up with similar examples as above by setting up large number of disjoint singletons and connect them to all other orbits, imposing that these singleton vertices form the largest independent set of $G_n$ until $n$ gets sufficiently large. This construction is relatively uninteresting as it does not reveal much structure of $\FI$-graphs. Some examples that are less ad hoc are below.

\begin{example}\label{groupofKnexample}
Let $\mathfrak{C}$ be a graph satisfying the following:
\begin{itemize}
    \item it has $k$ vertices, all of them are $v^{(3)}$;
    \item each $v^{(3)}$ has a loop $e_1^{(3)}$;
    \item every two $v^{(3)}$ are connected by an edge $e_2^{(3,3)}$.
\end{itemize}
The $\FI$-graph associated to $\mathfrak{C}$ is $k$ copies of the complete graph $K_n$, such that between any two such complete graphs, vertices are only connected if they have the same label. In this example, observe that in any independent set:
\begin{itemize}
    \item for each integer $j$, there is at most one vertex labelled $j$; and 
    \item there is at most one vertex associated to any given orbit. 
\end{itemize}
Therefore, we can deduce that 
\[
\alpha(G_n) = \left\{
\begin{array}{ll}
    n, & n < k \\
    k, & n \geq k
    \end{array} 
\right.
\]
which means that $G_\bullet$ has independence-number stable degree $\leq k$.
\end{example}

This remaining example is a vertex-quadratic $\FI$-graph analogous to the example above while showing some interesting properties of the Kneser graph $KG_{n,2}$. 

\begin{example}\label{groupofKGn2example}
Let $\mathfrak{C}$ be a graph satisfying the following:
\begin{itemize}
    \item it has $k$ vertices, all of them are $v^{(2)}$;
    \item each $v^{(2)}$ has a loop $e_1^{(2)}$;
    \item every two $v^{(2)}$ are connected by two edges $e_2^{(2,2)}$ and $e_3^{(2,2)}$.
\end{itemize}
Then it turns out that 
\[
\alpha(G_n) = \left\{
\begin{array}{ll}
    n, & n \leq 3k \text{ and $n$ is a multiple of $3$ }  \\
    n-1, & \text{else}
    \end{array} 
\right.
\]
which means that $G_\bullet$ has independence-number stable degree $\leq 3k+1$.
\end{example}

We can see that the independence-number stable degree of example \ref{groupofKGn2example} also grows arbitrarily large as $k$ grows. Before we present the details, notice that the complete graph $K_n$ is exactly the Kneser graph $KG_{n,1}$. Therefore, the $\FI$-graph in this example, as well as example \ref{groupofKnexample}, can be viewed as a union of $k$-copies of Kneser graphs where every two vertices from different orbits are connected as long as their labels are not disjoint. One may then ask whether the independence-number stable degree always grow arbitrarily large for multiple copies of Kneser graphs $KG_{\bullet,r}$ with this property for larger $r$, given that now it holds for $r = 1$ and $2$. We claim that this is not the case for any $r \geq 3$.

\textbf{For the following definitions and lemmas, let $G_\bullet$ be an $\FI$-graph of $k$-copies of Kneser graphs $KG_{n,r}$, connected to each other by adding edges between subsets that intersect non-trivially, and assume $r \geq 2$.} 

We will now work towards proving the following.

\begin{proposition}\label{techexample}
Suppose $r \geq 3$. Then for every $n \geq 2r$, the independence number is given by $\alpha(G_n) = \binom{n-1}{r-1}$. 
\end{proposition}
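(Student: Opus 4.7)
The plan is to prove matching upper and lower bounds on $\alpha(G_n)$. For the \emph{lower bound}, the classical Erd\H{o}s--Ko--Rado theorem produces an intersecting family of size $\binom{n-1}{r-1}$ whenever $n \geq 2r$: fix any $p \in [n]$ and take the star $\{X \in \binom{[n]}{r} : p \in X\}$, which is intersecting by construction. Placing this star inside any single copy of $KG_{n,r}$ (and keeping the other $k-1$ copies empty) gives an independent set in $G_n$ of the claimed size, since the star has no internal Kneser-edges and no cross-copy edges are incurred.

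For the \emph{upper bound}, let $S$ be any independent set and decompose $S = \bigsqcup_{i=1}^{k} A_i$, where $A_i \subseteq \binom{[n]}{r}$ collects the vertices of $S$ living in the $i$th copy. The two types of edges in $G_n$ impose two structural conditions on the $A_i$: each $A_i$ is an intersecting family (otherwise two disjoint $r$-subsets in the same copy would be joined by a Kneser edge), and for $i \neq j$ every $X \in A_i$, $Y \in A_j$ satisfy $X \cap Y = \emptyset$ (otherwise the cross-copy edges coming from non-trivial intersection would apply). Setting $U_i = \bigcup_{X \in A_i} X \subseteq [n]$, the second condition forces $U_1,\ldots,U_k$ to be pairwise disjoint, so $\sum_i |U_i| \leq n$, and each $A_i$ is an intersecting family of $r$-subsets drawn from $U_i$ alone.

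Writing $f(m)$ for the maximum size of an intersecting family of $r$-subsets of an $m$-element set, EKR gives $f(m) = \binom{m-1}{r-1}$ for $m \geq 2r$; for $r \leq m < 2r$ every two $r$-subsets of an $m$-set intersect, giving $f(m) = \binom{m}{r}$; and $f(m) = 0$ for $m < r$. The upper bound thus reduces to the combinatorial claim
\begin{equation*}
\sum_i f(|U_i|) \leq f(n) = \binom{n-1}{r-1} \qquad \text{whenever } |U_i| \geq r \text{ and } \sum_i |U_i| \leq n.
\end{equation*}
I plan to deduce this from the superadditivity inequality
\begin{equation*}
f(a) + f(b) \leq f(a+b) \qquad \text{for all } a,b \geq r, \ r \geq 3,
\end{equation*}
iterated over the parts $|U_i|$ together with the monotonicity $f(n') \leq f(n)$ for $n' \leq n$.

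The superadditivity inequality is the main obstacle, and it is the step at which the hypothesis $r \geq 3$ becomes essential. The case $a,b \geq 2r$ reduces to the convexity-type bound $\binom{a-1}{r-1} + \binom{b-1}{r-1} \leq \binom{a+b-1}{r-1}$, which I will establish by writing each side as a partial sum of terms $\binom{\ell}{r-2}$ via a Pascal telescoping. The mixed case $a \geq 2r > b \geq r$ is similar, modulo an auxiliary estimate of the form $r \binom{2r-1}{r-2} \geq \binom{2r-1}{r}$, which fails at $r=2$ and holds (with slack) for $r \geq 3$. The most delicate case is $a,b \in [r,2r-1]$, where $a+b \in [2r,4r-2]$ and the inequality becomes $\binom{a}{r} + \binom{b}{r} \leq \binom{a+b-1}{r-1}$; this case fails for $r=2$ (for example $a=b=3$ gives $6 > 5$, which is exactly the phenomenon underlying Example \ref{groupofKGn2example}), but holds for $r \geq 3$, as one can verify by exploiting convexity of $a \mapsto \binom{a}{r} + \binom{c-a}{r}$ for fixed $c = a+b$ and then checking the two extremes $a = r$ and $a = 2r-1$. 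Once this lemma is in hand, iterating it over the decomposition gives $\sum_i |A_i| \leq \sum_i f(|U_i|) \leq f(n) = \binom{n-1}{r-1}$, which matches the lower bound and closes the argument.
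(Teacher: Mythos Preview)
Your proposal is correct and follows essentially the same approach as the paper: decompose an independent set according to the pairwise-disjoint ground sets $U_i$, bound each piece by the Erd\H{o}s--Ko--Rado maximum $f(|U_i|)$, and then reduce everything to the binomial superadditivity $f(a)+f(b)\le f(a+b)$ for $a,b\ge r$. The paper establishes this same inequality through its ``merging'' Lemmas \ref{lemmaeliminatesmallpartition} and \ref{lemmaeliminatelargepartition} (supported by Lemmas \ref{binomiallemma}--\ref{binomiallemma2}), with a slightly different case split than yours, but the content is the same; your superadditivity framing is in fact a tidier packaging of those lemmas.
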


\begin{definition}
for any $n \geq r$, let $H$ be an independent set of $G_n$, and let $\mathcal{O}$ be a $KG_{\bullet,r}$-orbit. We denote $H_\mathcal{O} = H \cap [\mathcal{O}]_n$ to be the independent subset restricted to the Kneser graph $KG_{n,r}$ corresponding to this orbit. 
\end{definition}

Denote $\mathcal{O}_1$, $\mathcal{O}_2$, ..., $\mathcal{O}_k$ to be the Kneser-graph orbits of $G_\bullet$. Notice that our construction of $G_\bullet$ implies that if $H$ is an independent set of $G_n$ and $\mathcal{O}_i$, $\mathcal{O}_j$ are different Kneser-graph orbits, then there does not exist a vertex $v_1 \in H_{\mathcal{O}_i}$ labelled $\{a_1, ..., a_r\}$ and a vertex $v_2 \in H_{\mathcal{O}_j}$ labelled $\{b_1, ..., b_r\}$ such that these two labels are not disjoint. This means that there exists a subset $S \subseteq [n]$, and a partition $S_1, ..., S_k \subseteq S$, such that the numbers in the label of any vertex in $H_{\mathcal{O}_i}$ must exclusively come from $S_i$. In this case, each $S_i$ is either empty or has size not smaller than $r$. Clearly the set $S$ and the partition $S_1, ..., S_k$ are uniquely determined.

Now, notice that for each $i$, there exists a largest independent set of $G_n \cap [\mathcal{O}_i]_n$ whose vertices can only be labelled using elements from $S_i$. Denote it $B_i$. It follows from our construction of $G_\bullet$ that $B = \bigcup_{i=1}^k B_i$ is the (not necessarily unique) largest independent set of $G_n$ under this given partition $S_1, ..., S_k$. Since $H$ is an independent set under the same partition, $B$ is a larger (or at the very least, not smaller) independent set than $H$. Since our goal is to find the largest independent set of $G_n$, so WLOG we only need to compare the largest independent set of every possible partition of subsets of $[n]$. 

We can further reduce this problem by eliminating partitions of proper subsets of $[n]$. Assume $S_1, ..., S_k$ is a partition of $S$ where $S$ is a proper subset of $[n]$, and $H$ is a largest independent set associate to this partition. Then we can add the missing numbers into any nonzero $S_i$ to form a partition of $[n]$, and clearly $H$ is an independent set under this new partition. This means that if $H'$ is a largest independent set associate to this new partition, then $|H'| \geq |H|$. Therefore, WLOG we only need to compare the largest independent set of every possible partition of $[n]$.

\begin{lemma}\label{sizeofsetlemma}
Let $S_1, ..., S_k$ be a partition of a subset of $[n]$, and $H$ be a largest independent set associate to this partition. Denote $m_i$ to be the size of $S_i$. Then we have
\[
|H| = \sum_{i=1}^k |H_{\mathcal{O}_i}|
\]
where 
\[
|H_{\mathcal{O}_i}| = \left\{
\begin{array}{ll}
    0, & m_i = 0 \\
    \binom{m_i}{r}, & r \leq m_i \leq 2r-1 \\
    \binom{m_i-1}{r-1}, & m_i \geq 2r.
    \end{array} 
\right.
\]
\end{lemma}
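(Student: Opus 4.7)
The plan is to reduce the lemma to a straightforward application of the Erdős-Ko-Rado theorem on each block of the partition. First I would observe that the independent set $H$ decomposes as the disjoint union $H = \bigsqcup_{i=1}^k H_{\mathcal{O}_i}$ by the very definition of $H_{\mathcal{O}_i} = H \cap [\mathcal{O}_i]_n$, so additivity $|H| = \sum_i |H_{\mathcal{O}_i}|$ is immediate. The nontrivial content is computing each $|H_{\mathcal{O}_i}|$ individually.

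Next I would unpack what an independent subset of $[\mathcal{O}_i]_n$ whose labels only use elements of $S_i$ looks like. Vertices of $\mathcal{O}_i$ are labelled by $r$-subsets, and two such vertices are adjacent precisely when their labels are disjoint (this is the Kneser edge relation, which coincides with the inherited edges inside orbit $\mathcal{O}_i$, since the cross-orbit edges of $G_n$ are not relevant once we restrict to a single $\mathcal{O}_i$). Consequently, $H_{\mathcal{O}_i}$ is an independent set of the Kneser graph $KG_{m_i, r}$ on the ground set $S_i$, and maximising $|H_{\mathcal{O}_i}|$ amounts to computing $\alpha(KG_{m_i, r})$.

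Finally I would do the case analysis in $m_i$. If $m_i = 0$, there are no $r$-subsets to choose and $|H_{\mathcal{O}_i}| = 0$. If $r \leq m_i \leq 2r-1$, then any two $r$-subsets of $S_i$ must intersect (because $r + r > m_i$), so every $r$-subset of $S_i$ is a vertex of $KG_{m_i,r}$ with no edges to any other such vertex, giving $\alpha(KG_{m_i,r}) = \binom{m_i}{r}$. If $m_i \geq 2r$, an independent set in $KG_{m_i,r}$ is exactly an intersecting family of $r$-subsets of $S_i$, and the classical Erdős-Ko-Rado theorem yields $\alpha(KG_{m_i,r}) = \binom{m_i - 1}{r-1}$ (realised, for instance, by the star consisting of all $r$-subsets containing a fixed element of $S_i$). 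Since each $|H_{\mathcal{O}_i}|$ can be realised independently by choosing a maximal family inside $S_i$, these optima are simultaneously attainable, and the formula follows.

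The only substantive input is the Erdős-Ko-Rado theorem in the case $m_i \geq 2r$; everything else is bookkeeping. I do not anticipate a real obstacle, though one should be mindful of the fact that the proposition as stated assumes $r \geq 2$ (so that $KG_{m_i,r}$ actually has edges for $m_i \geq 2r$) and that the earlier discussion has already restricted to partitions in which each nonempty block satisfies $m_i \geq r$, so no further cases need to be checked.
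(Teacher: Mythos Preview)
Your proposal is correct and follows essentially the same approach as the paper: reduce to computing the independence number of the induced Kneser graph $KG_{m_i,r}$ on each block $S_i$, and invoke the Erd\H{o}s--Ko--Rado theorem. The paper's proof is in fact terser than yours---it simply notes that the induced subgraph on $V_{S_i}$ is itself a Kneser graph and calls the result a direct consequence of Erd\H{o}s--Ko--Rado---whereas you spell out the case analysis explicitly.
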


\begin{proof}
If $K$ is a subset of $[n]$, and denote $V_K \subseteq V(KG_{n,r})$ to be the collection of all vertices whose labels only contain elements from $K$, then it is easy to see that the induced subgraph $KG_{n,r}[V_K]$ is itself a Kneser graph with size of vertex set $|V_K| = \binom{|K|}{r}$. Since $H_{\mathcal{O}_i}$ is the largest independent set of each $\big[KG_{n,r}[V_{S_i}]\big]_i$, so the result above is a direct consequence of Erd\"os-Ko-Rado Theorem.
\end{proof}

We present the following three elementary results for later use:

\begin{lemma}\label{binomiallemma}
Let $r \leq a \leq 2r-2$ and $b \geq 2r-1$. Then $\binom{a}{r} + \binom{b}{r-1} < \binom{a+b-1}{r-1}$.
\end{lemma}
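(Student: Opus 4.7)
The plan is to rewrite the difference $\binom{a+b-1}{r-1} - \binom{b}{r-1}$ as a telescoping sum via Pascal's identity, then bound that sum from below. Since $\binom{j+1}{r-1} - \binom{j}{r-1} = \binom{j}{r-2}$, telescoping over $j$ from $b$ to $a+b-2$ would give
\[
\binom{a+b-1}{r-1} - \binom{b}{r-1} = \sum_{j=b}^{a+b-2} \binom{j}{r-2},
\]
which is a sum of $a-1$ nonnegative terms, each at least $\binom{b}{r-2}$ by monotonicity of $\binom{j}{r-2}$ in $j$.

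The next step would be to show $(a-1)\binom{b}{r-2} > \binom{a}{r}$. Using $a \geq r$ and $b \geq 2r-1$, the left-hand side is at least $(r-1)\binom{2r-1}{r-2}$. For the right-hand side, the hypothesis $a \leq 2r-2$ yields $\binom{a}{r} \leq \binom{2r-2}{r} = \binom{2r-2}{r-2}$, where the last equality is by symmetry $\binom{2r-2}{r} = \binom{2r-2}{2r-2-r}$, and the first inequality is the monotonicity of $\binom{a}{r}$ in $a$ for $a \geq r$. Since $\binom{2r-1}{r-2} \geq \binom{2r-2}{r-2}$ by Pascal, and $r - 1 \geq 2$ in the ambient setting (the enclosing Proposition~\ref{techexample} assumes $r \geq 3$, and indeed the inequality degenerates to an equality when $r = 2$, $a = 2$), one concludes $(r-1)\binom{2r-1}{r-2} > \binom{2r-2}{r-2} \geq \binom{a}{r}$, which closes the argument.

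The main delicacy is balancing the two hypotheses, each of which is tight. The assumption $b \geq 2r-1$ is what keeps each telescoped term $\binom{j}{r-2}$ large, while $a \leq 2r-2$ is what controls $\binom{a}{r}$ from above via binomial symmetry. At the boundary case $a = 2r-2$ one actually has $\binom{a}{r} = \binom{2r-2}{r-2}$ exactly, so the crude bound $(a-1)\binom{b}{r-2} \geq \binom{2r-2}{r-2}$ on its own is not strict; the strict inequality must be extracted from the factor $(a-1) \geq r-1 \geq 2$, which is precisely where the implicit assumption $r \geq 3$ enters the picture.
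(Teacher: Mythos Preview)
Your argument is correct (under the implicit hypothesis $r\geq 3$, which you rightly flag). The telescoping identity, the lower bound $(a-1)\binom{b}{r-2}$, and the comparison $(r-1)\binom{2r-1}{r-2} \geq 2\binom{2r-2}{r-2} > \binom{2r-2}{r-2} \geq \binom{a}{r}$ all check out.

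The paper takes a different, shorter route. Rather than telescoping the whole difference $\binom{a+b-1}{r-1}-\binom{b}{r-1}$, it bounds $\binom{a}{r}$ directly: write $\binom{a}{r}=\binom{a}{a-r}$ by symmetry, use $a<b$ to get $\binom{a}{a-r}<\binom{b}{a-r}$, use $a-r\leq r-2$ together with $b>2(r-2)$ (so that $\binom{b}{k}$ is increasing in $k$ on this range) to get $\binom{b}{a-r}\leq\binom{b}{r-2}$, and then a single Pascal step gives $\binom{b}{r-2}+\binom{b}{r-1}=\binom{b+1}{r-1}\leq\binom{a+b-1}{r-1}$ since $a\geq 2$. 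So the paper replaces your $(a-1)$-term sum by a one-step Pascal identity after first pushing $\binom{a}{r}$ over to a $\binom{b}{\cdot}$ term. Your approach is a bit longer but more explicit about where strictness is used; in particular, you correctly isolate the $r\geq 3$ dependence, whereas in the paper's chain the same dependence is present but hidden (when $a=r$ the step $\binom{a}{a-r}<\binom{b}{a-r}$ degenerates, and strictness must instead come from $\binom{b}{0}<\binom{b}{r-2}$, which again needs $r\geq 3$).
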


\begin{proof}
Notice that $a \geq 2$, $a < b$, $a-r \leq r-2$ and $b > 2(r-2)$. So 
\begin{align*}
\binom{a}{r} + \binom{b}{r-1} &= \binom{a}{a-r} + \binom{b}{r-1} \\ 
&< \binom{b}{a-r} + \binom{b}{r-1} \\
&\leq \binom{b}{r-2} + \binom{b}{r-1} \\
&= \binom{b+1}{r-1} \leq \binom{a+b-1}{r-1}.
\end{align*}
\end{proof}

\begin{lemma}\label{easybinomiallemma}
Let $r \geq 3$. Then $2 \cdot \binom{2r-1}{r-1} < \binom{4r-3}{r-1}$. 
\end{lemma}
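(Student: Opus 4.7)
The plan is to write the ratio $\binom{4r-3}{r-1}\big/\binom{2r-1}{r-1}$ as an explicit product of $r-1$ linear factors and lower-bound each factor. After cancelling the common $(r-1)!$ in the standard formulas for the two binomial coefficients, one gets
\[
\frac{\binom{4r-3}{r-1}}{\binom{2r-1}{r-1}} \;=\; \prod_{i=0}^{r-2} \frac{4r-3-i}{2r-1-i}.
\]
The key step would then be the elementary algebraic identity
\[
\frac{4r-3-i}{2r-1-i} \;=\; 2 + \frac{i-1}{2r-1-i},
\]
which makes each factor trivial to compare to $2$: the factor at $i=0$ equals $2 - \tfrac{1}{2r-1}$, the factor at $i=1$ equals exactly $2$, and every factor for $i \geq 2$ strictly exceeds $2$.

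With these observations in hand, the conclusion falls out quickly. Under the hypothesis $r \geq 3$, the product has at least the two factors at $i=0$ and $i=1$; their product alone is $\left(2 - \tfrac{1}{2r-1}\right) \cdot 2 = 4 - \tfrac{2}{2r-1}$, which for $r \geq 3$ (so $2r-1 \geq 5$) is at least $18/5 > 2$. Any remaining factors (for $i \geq 2$, when $r \geq 4$) are themselves greater than $2$, so including them only increases the product. Multiplying the resulting inequality through by $\binom{2r-1}{r-1}$ yields the claim.

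The only delicate point, and the closest thing to an obstacle, is the fact that the factor at $i=0$ lies slightly below $2$: this is why the analogous inequality fails for $r=2$, where that deficient factor is the entire product. The hypothesis $r \geq 3$ enters precisely to guarantee the presence of the compensating factor at $i=1$, after which the remaining factors contribute only a cushion. Nothing deeper than this elementary bookkeeping is needed.
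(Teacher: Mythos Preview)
Your proof is correct. The ratio factorization, the identity $\tfrac{4r-3-i}{2r-1-i} = 2 + \tfrac{i-1}{2r-1-i}$, and the bookkeeping with the $i=0$ and $i=1$ factors all check out, and you have correctly identified why $r \geq 3$ is needed.

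The paper itself does not actually give a proof: it records the lemma as an ``Exercise'' with the hint ``Notice that $4r-4 = 2(2r-2)$,'' which points the reader toward an additive approach (e.g., Pascal/Vandermonde-type manipulations using the split $4r-4 = (2r-2)+(2r-2)$). Your argument takes a different, multiplicative route by bounding the ratio factor-by-factor. Your approach has the advantage of being fully written out and of making transparent exactly where $r \geq 3$ enters (the compensating factor at $i=1$); the paper's hinted approach would presumably give a slightly slicker one-line identity but leaves the work to the reader. Either is perfectly adequate for this elementary lemma.
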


\begin{proof}
Exercise. Notice that $4r-4 = 2 \cdot (2r-2)$.
\end{proof}

\begin{lemma}\label{binomiallemma2}
Let $r \geq 3$, $c, d \geq 2r-1$. Then $\binom{c}{r-1} + \binom{d}{r-1} < \binom{c+d-1}{r-1}$. 
\end{lemma}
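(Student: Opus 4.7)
The plan is to reduce the inequality to the base case $c=d=2r-1$, which is exactly Lemma \ref{easybinomiallemma}, by showing that the ``gap'' strictly increases as we increase either variable. Concretely, define
\[
f(c,d) := \binom{c+d-1}{r-1} - \binom{c}{r-1} - \binom{d}{r-1},
\]
so that the desired inequality is $f(c,d) > 0$ for all $c,d \geq 2r-1$. I would argue that $f$ is strictly increasing in $c$ on the region $\{c \geq 2r-1,\ d \geq 2r-1\}$, and by symmetry in $d$ as well. Combined with the base value $f(2r-1, 2r-1) > 0$ from Lemma \ref{easybinomiallemma}, this immediately gives the result for all $(c,d)$ with $c, d \geq 2r-1$.

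To establish monotonicity in $c$, I would apply Pascal's identity $\binom{n+1}{k} - \binom{n}{k} = \binom{n}{k-1}$ to both the first and second terms of $f$, giving
\[
f(c+1, d) - f(c, d) = \binom{c+d-1}{r-2} - \binom{c}{r-2}.
\]
Since $d \geq 2r-1 \geq 5 > 1$ we have $c + d - 1 > c$, and since $r \geq 3$ we have $r - 2 \geq 1$, so the right-hand side is strictly positive. Monotonicity in $d$ follows by the symmetry $f(c,d) = f(d,c)$.

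The proof thus consists of one Pascal-identity computation together with the base case from Lemma \ref{easybinomiallemma}; there is no serious obstacle. The only subtle point is that the hypothesis $r \geq 3$ is essential: it ensures $r - 2 \geq 1$ so that $\binom{c+d-1}{r-2}$ strictly exceeds $\binom{c}{r-2}$ whenever $c + d - 1 > c$, making the induction step strict rather than merely nonstrict.
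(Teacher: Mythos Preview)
Your proof is correct and follows essentially the same approach as the paper: both reduce to the base case $c=d=2r-1$ (Lemma~\ref{easybinomiallemma}) via Pascal's identity and the monotonicity of $\binom{n}{r-2}$ in $n$. The only difference is packaging---the paper unrolls your induction into a hockey-stick telescoping sum, writing $\binom{c}{r-1}=\binom{2r-1}{r-1}+\sum_{i=2r-1}^{c-1}\binom{i}{r-2}$ and then shifting the summation indices, whereas your monotonicity-of-$f$ formulation is a cleaner way to say the same thing.
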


\begin{proof}
\begin{align*}
\binom{c}{r-1} + \binom{d}{r-1} &= \Bigg(\binom{2r-1}{r-1} + \sum_{i=2r-1}^{c-1} \binom{i}{r-2}\Bigg) + \Bigg(\binom{2r-1}{r-1} + \sum_{j=2r-1}^{d-1} \binom{j}{r-2}\Bigg) \\
&= 2 \cdot \binom{2r-1}{r-1} + \sum_{i=2r-1}^{c-1} \binom{i}{r-2} + \sum_{j=2r-1}^{d-1} \binom{j}{r-2} \\
&< \binom{4r-3}{r-1} + \sum_{i=2r-1}^{c-1} \binom{i}{r-2} + \sum_{j=2r-1}^{d-1} \binom{j}{r-2} \text{ (by lemma \ref{easybinomiallemma})}\\
&\leq \binom{4r-3}{r-1} + \sum_{i=(2r-1)+(2r-2)}^{(c-1)+(2r-2)} \binom{i}{r-2} + \sum_{j=(2r-1)+(c-1)}^{(d-1)+(c-1)} \binom{j}{r-2} \\
&= \binom{4r-3}{r-1} + \sum_{i=4r-3}^{c+2r-3} \binom{i}{r-2} + \sum_{j=c+2r-2}^{c+d-2} \binom{j}{r-2} \\
&= \binom{c+d-1}{r-1}. 
\end{align*}
\end{proof}

For the remaining lemmas, let $n$ be an integer $\geq 2r$.

\begin{lemma}\label{lemmaeliminatesmallpartition}
Let $S_1, ..., S_k$ be a partition of $[n]$ with at least two nonempty $S_i$'s, and let $H$ be a largest independent set of $G_n$ associate to this partition. Assume $r \leq |S_i| \leq 2r-2$ for some $i$. If we treat $S_i$ and another nonempty set as a whole to form a new partition, and let $H'$ be a largest independent set of this new partition, then $|H'| \geq |H|$.
\end{lemma}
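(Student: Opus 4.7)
Let $m_j=|S_j|$ for the other nonempty block, so by hypothesis $m_j\geq r$. Lemma \ref{sizeofsetlemma} tells us that $S_i$ and $S_j$ together contribute $\binom{m_i}{r}+|H_{\mathcal{O}_j}|$ to $|H|$, while the remaining blocks contribute identically to $|H|$ and $|H'|$. After merging, the combined block $S_i\cup S_j$ has size $m_i+m_j\geq 2r$, hence contributes $\binom{m_i+m_j-1}{r-1}$ to $|H'|$, and the emptied block contributes zero. Thus it suffices to establish
\[
\binom{m_i}{r}+|H_{\mathcal{O}_j}| \;\leq\; \binom{m_i+m_j-1}{r-1},
\]
which I would break into two cases depending on the size of $m_j$.

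\textbf{Case 1: $m_j\geq 2r-1$.} Here Lemma \ref{binomiallemma} applies almost directly. If $m_j=2r-1$, then $|H_{\mathcal{O}_j}|=\binom{2r-1}{r}=\binom{2r-1}{r-1}$, and the lemma with $a=m_i$, $b=m_j$ yields the required strict bound. If $m_j\geq 2r$, then $|H_{\mathcal{O}_j}|=\binom{m_j-1}{r-1}$, and the lemma with $a=m_i$ and $b=m_j-1\geq 2r-1$ gives
\[
\binom{m_i}{r}+\binom{m_j-1}{r-1} \;<\; \binom{m_i+m_j-2}{r-1} \;\leq\; \binom{m_i+m_j-1}{r-1}.
\]

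\textbf{Case 2: $r\leq m_j\leq 2r-2$.} Now $|H_{\mathcal{O}_j}|=\binom{m_j}{r}$, and both $m_i$ and $m_j$ lie in the middle range $[r,2r-2]$, which falls outside the hypothesis of Lemma \ref{binomiallemma}. This is the main obstacle of the proof. The plan is to exploit the extra slack in binomial monotonicity through the chain
\[
\binom{m_i}{r}+\binom{m_j}{r} \;\leq\; 2\binom{2r-2}{r} \;=\; 2\binom{2r-2}{r-2} \;\leq\; \binom{2r-2}{r-1}+\binom{2r-2}{r-2} \;=\; \binom{2r-1}{r-1} \;\leq\; \binom{m_i+m_j-1}{r-1},
\]
where the first estimate uses monotonicity of $\binom{\cdot}{r}$ on $[r,\infty)$ together with $m_i,m_j\leq 2r-2$; the third uses $\binom{2r-2}{r-2}\leq\binom{2r-2}{r-1}$ (the ratio equals $r/(r-1)\geq 1$); the fourth is Pascal's identity; and the last uses monotonicity of $\binom{\cdot}{r-1}$ with $m_i+m_j-1\geq 2r-1$.

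The subtlety is concentrated in Case 2, since Lemma \ref{binomiallemma} offers no direct control when both block sizes are small; one must pass through the symmetric identity $\binom{2r-1}{r-1}=\binom{2r-2}{r-1}+\binom{2r-2}{r-2}$ to convert two copies of $\binom{2r-2}{r}$ into a single $\binom{2r-1}{r-1}$, after which monotonicity closes the gap.
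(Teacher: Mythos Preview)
Your proof is correct and follows essentially the same approach as the paper's: the paper splits into three cases ($r\leq b\leq 2r-2$, $b=2r-1$, $b\geq 2r$) where you use two, but the arguments line up exactly---your Case~2 chain through $2\binom{2r-2}{r-2}\leq\binom{2r-2}{r-1}+\binom{2r-2}{r-2}=\binom{2r-1}{r-1}$ is the paper's Case~1 verbatim, and your Case~1 handles the paper's Cases~2 and~3 via Lemma~\ref{binomiallemma} in the same way (the only cosmetic difference is that for $m_j\geq 2r$ you apply the lemma with $b=m_j-1$ directly, whereas the paper first bounds $\binom{b-1}{r-1}<\binom{b}{r-1}$ and then invokes the lemma with $b=m_j$).
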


\begin{proof}
Let $S_i$ and $S_j$ be two nonempty sets of this partition, and let $r \leq |S_i| \leq 2r-2$. Now, set $S'_i = S_i \cup S_j$ and $S'_j = \emptyset$. Then $S_1, ..., S_k$ by replacing $S_i$ with $S'_i$ and $S_j$ with $S'_j$ is again a partition of the same set. Let $H'$ be the largest independent set of this new partition, and we can see the only difference between $H$ and $H'$ is that the independent subset $H_{\mathcal{O}_i} \cup H_{\mathcal{O}_j}$ is replaced by $H'_{\mathcal{O}_i}$. So suffice to compare $|H_{\mathcal{O}_i} \cup H_{\mathcal{O}_j}|$ and $|H'_{\mathcal{O}_i}|$. Now, denote $|S_i| = a$ (so $r \leq a \leq 2r-2$) and $|S_j| = b$, so $|S'_i| = a+b$. Since $a, b \geq r$, so $a+b \geq 2r$. Then lemma \ref{sizeofsetlemma} implies that $|H_{\mathcal{O}_i}| = \binom{a}{r}$ and $|H'_{\mathcal{O}_i}| = \binom{a+b-1}{r-1}$.

Case 1: $r \leq b \leq 2r-2$, so $|H_{\mathcal{O}_j}| = \binom{b}{r}$ by lemma \ref{sizeofsetlemma}. Then 
\begin{align*}
\binom{a}{r} + \binom{b}{r} &\leq \binom{2r-2}{r} + \binom{2r-2}{r} \\
&= \binom{2r-2}{r-2} + \binom{2r-2}{r-2} \\
&< \binom{2r-2}{r-2} + \binom{2r-2}{r-1} \\
&= \binom{2r-1}{r-1} \\
&\leq \binom{a+b-1}{r-1}.
\end{align*}

Case 2: $b = 2r-1$, so again $|H_{\mathcal{O}_j}| = \binom{b}{r}$ by lemma 
\ref{sizeofsetlemma}. Then $\binom{a}{r} + \binom{b}{r}$ = $\binom{a}{r} + \binom{b}{r-1} < \binom{a+b-1}{r-1}$ by lemma \ref{binomiallemma}.

Case 3: $b \geq 2r$, so $|H_{\mathcal{O}_j}| = \binom{b-1}{r-1}$ by lemma \ref{sizeofsetlemma}. Then $\binom{a}{r} + \binom{b-1}{r-1} < \binom{a}{r} + \binom{b}{r-1} < \binom{a+b-1}{r-1}$ by lemma \ref{binomiallemma}.

Combining the cases above, we conclude that $|H_{\mathcal{O}_i} \cup H_{\mathcal{O}_j}| < |H'_{\mathcal{O}_i}|$, so $H'$ is a strictly larger independent set of $G_n$ than $H$ is. We can say $|H'| \geq |H|$ for more generality.
\end{proof}

Lemma \ref{lemmaeliminatesmallpartition} implies that we can find a largest independent set $H \subseteq V(G_n)$ such that each set $S_i$ in the associated partition $S_1, ..., S_k$ determined by $H$ must be either empty or contain at least $2r-1$ elements. 

\begin{lemma}\label{lemmaeliminatelargepartition}
Let $S_1, ..., S_k$ be a partition of $[n]$ with at least two nonempty $S_i$'s and $|S_i| \geq 2r-1$ for each $i$. Let $H$ be a largest independent set of $G_n$ associate to this partition. Assume $|S_i| \geq 2r$ for some $i$. If we treat $S_i$ and another nonempty set as a whole to form a new partition, and let $H'$ be a largest independent set of this new partition, then $|H'| \geq |H|$.
\end{lemma}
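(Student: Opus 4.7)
The plan is to mirror the strategy used for Lemma \ref{lemmaeliminatesmallpartition}, swapping out the appeal to Lemma \ref{binomiallemma} for an appeal to Lemma \ref{binomiallemma2}. Concretely, pick an index $i$ with $|S_i| = a \geq 2r$ and any other nonempty part $S_j$ with $|S_j| = b \geq 2r-1$, and form a new partition by setting $S_i' := S_i \cup S_j$ and $S_j' := \emptyset$ while leaving the remaining parts unchanged. Since the two partitions differ only in positions $i$ and $j$, Lemma \ref{sizeofsetlemma} reduces the claim $|H'| \geq |H|$ to the single inequality $|H_{\mathcal{O}_i}| + |H_{\mathcal{O}_j}| \leq |H_{\mathcal{O}_i}'|$.

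After this reduction, the plan is to compute both sides using Lemma \ref{sizeofsetlemma}. Because $a \geq 2r$ and $a+b \geq 4r-1 \geq 2r$, we get $|H_{\mathcal{O}_i}| = \binom{a-1}{r-1}$ and $|H_{\mathcal{O}_i}'| = \binom{a+b-1}{r-1}$, and the desired inequality becomes
\[
\binom{a-1}{r-1} + |H_{\mathcal{O}_j}| \;\leq\; \binom{a+b-1}{r-1}.
\]
I would then split on $b$. If $b = 2r-1$, Lemma \ref{sizeofsetlemma} together with the identity $\binom{2r-1}{r} = \binom{2r-1}{r-1}$ gives $|H_{\mathcal{O}_j}| = \binom{2r-1}{r-1}$, and Lemma \ref{binomiallemma2} applied with $c = a-1$ and $d = 2r-1$ (both $\geq 2r-1$) gives $\binom{a-1}{r-1} + \binom{2r-1}{r-1} < \binom{a+2r-3}{r-1} \leq \binom{a+b-1}{r-1}$. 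If $b \geq 2r$, then $|H_{\mathcal{O}_j}| = \binom{b-1}{r-1}$, and Lemma \ref{binomiallemma2} applied with $c = a-1$, $d = b-1$ yields $\binom{a-1}{r-1} + \binom{b-1}{r-1} < \binom{a+b-3}{r-1} \leq \binom{a+b-1}{r-1}$. In both cases the inequality is strict, which is more than enough.

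Honestly, there is no real obstacle here, since the hard inequality has already been packaged into Lemma \ref{binomiallemma2}; the main thing is to be careful that the hypothesis $c, d \geq 2r-1$ of that lemma is satisfied, which is precisely the reason Lemma \ref{lemmaeliminatesmallpartition} was established first so that we may assume $|S_i| \geq 2r-1$ for all nonempty parts. The only other subtlety is the bookkeeping identity $\binom{2r-1}{r} = \binom{2r-1}{r-1}$ needed in Case~1 to convert the output of Lemma \ref{sizeofsetlemma} into a form to which Lemma \ref{binomiallemma2} applies. Once these are in place, the argument is a mechanical case split, and combined with the preceding lemma it will reduce the search for the optimal partition to those partitions having exactly one nonempty part, which is the setup needed for Proposition \ref{techexample}.
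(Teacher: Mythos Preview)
Your reduction and case split on $b$ match the paper's approach exactly, but you have overlooked one hypothesis: Lemma~\ref{binomiallemma2} requires $r \geq 3$, while the standing assumption in force for Lemma~\ref{lemmaeliminatelargepartition} is only $r \geq 2$. This matters, because the inequality in Lemma~\ref{binomiallemma2} is genuinely false for $r=2$ (it would read $c + d < c + d - 1$). The paper therefore treats $r \geq 3$ and $r = 2$ separately: for $r \geq 3$ it argues exactly as you do, while for $r = 2$ it computes directly that $(a-1)+b = a+b-1$ when $b = 3$ and $(a-1)+(b-1) = a+b-2 < a+b-1$ when $b \geq 4$. Note that in the first of these the inequality is only an equality, so your claim that ``in both cases the inequality is strict'' is wrong when $r=2$; this is precisely why the lemma is stated with $|H'| \geq |H|$ rather than $>$.

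A minor side remark: your closing sentence is slightly off. Lemmas~\ref{lemmaeliminatesmallpartition} and~\ref{lemmaeliminatelargepartition} together do not reduce to partitions with a single nonempty part; they reduce to partitions that either have a single nonempty part \emph{or} have every nonempty part of size exactly $2r-1$ (see Corollary~\ref{conditionindependentsetcorollary}). This second possibility is exactly what produces the exceptional behavior for $r=2$ in Example~\ref{groupofKGn2example}.
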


\begin{proof}
Let $S_i$ and $S_j$ be two nonempty sets of this partition, and let $|S_i| = a \geq 2r$. We adopt the identical method in lemma \ref{lemmaeliminatesmallpartition}, where this time $|H_{\mathcal{O}_i}| = \binom{a-1}{r-1}$ and $|H'_{\mathcal{O}_i}| = \binom{a+b-1}{r-1}$ by lemma \ref{sizeofsetlemma}. Recall that $b \geq 2r-1$. 

First, assume $r \geq 3$. 

Case 1.1: $b = 2r-1$, so $|H_{\mathcal{O}_j}| = \binom{b}{r} = \binom{b}{r-1}$ by lemma \ref{sizeofsetlemma}. Then $\binom{a-1}{r-1} + \binom{b}{r-1} < \binom{a+b-2}{r-1} < \binom{a+b-1}{r-1}$, where the first inequality comes from lemma \ref{binomiallemma2}.

Case 1.2: $b \geq 2r$, so $|H_{\mathcal{O}_j}| = \binom{b-1}{r-1}$ by lemma \ref{sizeofsetlemma}. Then $\binom{a-1}{r-1} + \binom{b-1}{r-1} < \binom{a+b-3}{r-1} < \binom{a+b-1}{r-1}$, where the first inequality comes from lemma \ref{binomiallemma2}.

Now, assume $r=2$.

Case 2.1: $b = 2r-1 = 3$, so $|H_{\mathcal{O}_j}| = \binom{b}{r} = \binom{b}{r-1} = b$ by lemma \ref{sizeofsetlemma}. Then $\binom{a-1}{r-1} + \binom{b}{r} = \binom{a-1}{1} + b = (a-1) + b = a+b-1 \leq \binom{a+b-1}{r-1}$.

Case 2.2: $b \geq 2r$, so $|H_{\mathcal{O}_j}| = \binom{b-1}{r-1} = b-1$ by lemma \ref{sizeofsetlemma}. Then $\binom{a-1}{r-1} + \binom{b-1}{r-1} = (a-1) + (b-1) = a+b-2 < a+b-1 \leq \binom{a+b-1}{r-1}$.

Combining the cases above, we conclude that $|H_{\mathcal{O}_i} \cup H_{\mathcal{O}_j}| \leq |H'_{\mathcal{O}_i}|$, so $H'$ is a independent set of $G_n$ whose size is larger than or equals to that of $H$, i.e. $|H'| \geq |H|$.
\end{proof}

This lemma combined with lemma \ref{lemmaeliminatesmallpartition} guarantee that when we look for a largest independent set, it is never required to partition $[n]$ into multiple nonempty sets unless each of them has size $2r-1$. 

\begin{corollary}\label{conditionindependentsetcorollary}
There exists a largest independent set $H \subseteq V(G_n)$ such that at least one of the followings holds:
\begin{enumerate}
    \item There is only one nonempty set among the corresponding partition $S_1, ..., S_k$, whose size is $n$; or \label{condition1corollary}
    \item The size of each set $S_i$ is either $0$ or $2r-1$. \label{condition2corollary}
\end{enumerate}
\end{corollary}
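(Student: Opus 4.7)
The plan is to start with an arbitrary largest independent set $H \subseteq V(G_n)$ and its associated partition $S_1,\ldots,S_k$ of $[n]$, then iteratively apply Lemmas \ref{lemmaeliminatesmallpartition} and \ref{lemmaeliminatelargepartition} to reshape this partition until it satisfies one of the two listed conditions, all while preserving the property that $H$ remains a largest independent set.

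Recall that the discussion following Lemma \ref{sizeofsetlemma} establishes that for any largest independent set $H$, each nonempty part $S_i$ in the associated partition satisfies $|S_i|\geq r$ (since a Kneser independent set requires at least $r$ available labels). The argument then splits into two phases. In Phase 1, while there exist at least two nonempty parts and some part $S_i$ with $r\leq |S_i|\leq 2r-2$, I apply Lemma \ref{lemmaeliminatesmallpartition} to merge $S_i$ into another nonempty part, producing a new partition whose largest independent set $H'$ satisfies $|H'|\geq |H|$; since $H$ was already largest, $H'$ is too, and I replace $H$ with $H'$. Each such merge strictly decreases the number of nonempty parts, so Phase 1 terminates after finitely many steps. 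At the end of Phase 1, either there is exactly one nonempty part (which must then equal $[n]$, giving condition \ref{condition1corollary}), or every nonempty part has size $\geq 2r-1$.

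In Phase 2, I apply the analogous process using Lemma \ref{lemmaeliminatelargepartition}: while there exist at least two nonempty parts and some $S_i$ with $|S_i|\geq 2r$, merge $S_i$ with another nonempty part. Once again, each merge preserves the property of $H$ being a largest independent set and strictly reduces the number of nonempty parts. Note that merging two parts of size $\geq 2r-1$ yields a part of size $\geq 4r-2 \geq 2r$, so the invariant ``every nonempty part has size $\geq 2r-1$'' is maintained throughout Phase 2. When the process terminates, either there is exactly one nonempty part, in which case condition \ref{condition1corollary} holds, or every nonempty part has size $\geq 2r-1$ but none has size $\geq 2r$, which forces every nonempty part to have size exactly $2r-1$, giving condition \ref{condition2corollary}.

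There is no real obstacle here beyond careful bookkeeping: the two lemmas do all of the work, and termination is immediate from the strict decrease in the number of nonempty parts. The only subtlety worth noting is that one must replace $H$ at each step with the new largest independent set $H'$ furnished by the lemma, so that the partition under consideration is always the one associated to the current candidate largest independent set. Since the lemmas are stated as inequalities $|H'|\geq|H|$ rather than equalities, once $H$ is a largest independent set, $H'$ is automatically one as well, and the iteration proceeds cleanly to one of the two asserted terminal configurations.
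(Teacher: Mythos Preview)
Your proposal is correct and follows essentially the same approach as the paper: the paper's proof is a single sentence stating that the corollary is a rephrasing of the remark that Lemmas \ref{lemmaeliminatesmallpartition} and \ref{lemmaeliminatelargepartition} together guarantee one never needs to partition $[n]$ into multiple nonempty sets unless each has size $2r-1$. Your two-phase iteration simply makes explicit the bookkeeping (termination via strict decrease in the number of nonempty parts, and preservation of the ``largest independent set'' property) that the paper leaves implicit.
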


\begin{proof}
This is just a rephrase of the sentence right before this corollary, which follows lemma \ref{lemmaeliminatesmallpartition} and lemma \ref{lemmaeliminatelargepartition}.
\end{proof}

\begin{corollary}\label{notmultiplecorollary}
If $n$ is not a multiple of $2r-1$, then the independence number $\alpha(G_n) = \binom{n-1}{r-1}$. 
\end{corollary}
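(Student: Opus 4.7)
The plan is to combine Corollary \ref{conditionindependentsetcorollary} with the assumption that $n$ is not a multiple of $2r-1$ to immediately eliminate one of the two possible structural cases for a largest independent set, and then read off the answer from Lemma \ref{sizeofsetlemma}.

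First I would invoke Corollary \ref{conditionindependentsetcorollary}, which guarantees the existence of a largest independent set $H \subseteq V(G_n)$ whose associated partition $S_1, \ldots, S_k$ of $[n]$ satisfies either condition \ref{condition1corollary} (exactly one nonempty part, of size $n$) or condition \ref{condition2corollary} (every nonempty part has size exactly $2r-1$). Under condition \ref{condition2corollary}, the nonempty parts partition $[n]$ into blocks of size $2r-1$, forcing $|S_1| + \cdots + |S_k| = n$ to be a nonnegative multiple of $2r-1$. Since $n$ is assumed to not be a multiple of $2r-1$, condition \ref{condition2corollary} is impossible, so we must be in condition \ref{condition1corollary}.

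Thus the associated partition has a single nonempty block $S_i$ with $|S_i| = n \geq 2r$. Applying Lemma \ref{sizeofsetlemma} to this partition, we get
\[
\alpha(G_n) = |H| = |H_{\mathcal{O}_i}| = \binom{n-1}{r-1},
\]
which is the desired equality.

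The argument has no real obstacle since all the work has already been done in the preceding lemmas; the only point requiring care is verifying that condition \ref{condition2corollary} is genuinely ruled out, which is immediate from the divisibility observation above. Note also that the proof goes through uniformly for both $r = 2$ and $r \geq 3$, since Corollary \ref{conditionindependentsetcorollary} and Lemma \ref{sizeofsetlemma} were established in both ranges.
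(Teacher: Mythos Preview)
Your proof is correct and essentially identical to the paper's own argument: both invoke Corollary~\ref{conditionindependentsetcorollary}, rule out condition~\ref{condition2corollary} by the divisibility obstruction, and then read off $\alpha(G_n)=\binom{n-1}{r-1}$ from Lemma~\ref{sizeofsetlemma} using $|S_i|=n\geq 2r$. Your added remark that the argument works uniformly for $r=2$ and $r\geq 3$ is a nice observation the paper leaves implicit.
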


\begin{proof}
Since $n$ is not a multiple of $2r-1$, clearly no independent set of $G_n$ satisfies condition \ref{condition2corollary} of corollary \ref{conditionindependentsetcorollary}. So there must be a largest independent set $H \subseteq V(G_n)$ that satisfies condition \ref{condition1corollary} of corollary \ref{conditionindependentsetcorollary}, say $|S_i| = n$. (Note: this is equivalent to say that all vertices of $H$ must come from a single Kneser graph $KG_{n,r}$.) Since $n \geq 2r$, so $\alpha(G_n) = |H| = |H_{\mathcal{O}_i}| = \binom{n-1}{r-1}$ by lemma \ref{sizeofsetlemma}.
\end{proof}

\begin{proof}[Proof of Proposition \ref{techexample}]
Lemma \ref{notmultiplecorollary} discusses the case such that $n$ is not a multiple of $2r-1$, so now it remains to discuss the case where $n$ is a multiple of $2r-1$, say $n = c \cdot (2r-1)$ where $c >1$. Clearly, the largest independent set that satisfies condition \ref{condition2corollary} of corollary \ref{conditionindependentsetcorollary} will have size $\binom{n-1}{r-1} = \binom{c \cdot (2r-1)-1}{r-1}$. Also, the largest independent set that satisfies condition \ref{condition1corollary} of corollary \ref{conditionindependentsetcorollary} (it does not even exist if $c > k$ where $k$ is the number of copies of Kneser graphs in $G_n$) has size $c \cdot \binom{2r-1}{r-1}$. It is easy to verify that $c \cdot \binom{2r-1}{r-1} < \binom{c \cdot (2r-1)-1}{r-1}$ by using lemma \ref{easybinomiallemma} and \ref{binomiallemma2} in an inductive argument. So $\alpha(G_n) = \binom{n-1}{r-1}$. 
\end{proof}

The above proposition shows that if $G_\bullet$ is $k$-copies of Kneser graphs $KG_{\bullet,r}$ where every two vertices from different Kneser-graph orbits are connected if their labels are not disjoint, then for every $k \geq 3$, the independence-number stable degree is $\leq 2r$ (in particular, $=2r$) no matter what $k$ is. The claim discussed by the end of example \ref{groupofKGn2example} is hereby supported. 

This example illustrates that the independence number stable range can behave in somewhat unexpected ways across examples. It remains an interesting problem to determine what combinatorial or algebraic property of the $\FI$-graph is dictating this stable range. In the case of $\FI$-modules, stable ranges can be seen as being dictated by stability degrees \cite{CEF} (the combinatorial explanation) or by the local cohomology theory \cite{LR} (the algebraic explanation). It would be interesting to see similar analyses here.

\begin{remark}
Example \ref{groupofKGn2example} discusses when $r=2$, which is special mainly because lemmas \ref{easybinomiallemma} and \ref{binomiallemma2} do not hold for $r=2$. However, all the pieces without specifying $r \geq 3$ holds for $r=2$, so one can easily obtain our results in this example using these pieces.
\end{remark}

\printbibliography

\end{document}